\documentclass[11pt,a4paper]{article}
\usepackage[utf8]{inputenc}
\usepackage[english]{babel}
\usepackage{amsmath}
\usepackage{amsthm}
\usepackage{amsfonts}
\usepackage{amssymb}
\usepackage{cite}
\usepackage{hyperref}
\usepackage{authblk}
\usepackage{xcolor}
\usepackage{bm}
\usepackage[left=2cm,right=2cm,top=2cm,bottom=2cm]{geometry}

\hypersetup{colorlinks = true, linkcolor={blue},citecolor={blue},urlcolor={blue}}

\allowdisplaybreaks
\numberwithin{equation}{section}

\newtheorem{theorem}{Theorem}[section]

\newtheorem{lemma}{Lemma}[section]

\newtheorem{remark}{Remark}[section]
\theoremstyle{definition}
\newtheorem{definition}{Definition}[section]

\newcommand{\E}{\mathbb{E}}
\renewcommand{\epsilon}{\varepsilon}

\title{\textbf{Stochastic Allen-Cahn Equation with Logarithmic Potential}\footnote{\textbf{Acknowledgment:} The author is very grateful to Luca Scarpa and Carlo Orrieri for their fundamental support throughout the work.}}
\author{Federico Bertacco}
\affil{\small{e-mail: \href{mailto:f.bertacco20@imperial.ac.uk}{f.bertacco20@imperial.ac.uk}}\\
\small{Department of Mathematics, Imperial College London}\\
\small{Huxley Building, South Kensington, London SW7 2AZ}}
\date{\vspace{-8ex}}

\begin{document}
{\hypersetup{urlcolor = {black}} \maketitle}
\begin{abstract}
\noindent
We prove existence and uniqueness of a solution for the stochastic Allen-Cahn equation with logarithmic potential and multiplicative Wiener noise, under homogeneous Neumann boundary condition. The existence of a solution is obtained in the variational sense by means of an approximated equation involving a Yosida regularization of the nonlinearity. The noise is assumed to vanish at the extremal points of the physical relevant domain and to satisfy a suitable Lipschitz-continuity property, allowing to prove uniform estimates of the approximated solution. The passage to the limit is then carried out and continuous dependence on the initial datum of the solution is verified. Under an additional assumption, the existence of an analytically strong solution is proved. Finally, estimates for the derivatives of the logarithmic potential are derived, in view of the study of an optimal control problem associated to the stochastic Allen-Cahn equation.

\vspace*{4mm}
\noindent
\textbf{AMS Subject Classification:} 35K20, 35K55, 35R60, 60H15.

\vspace*{4mm}
\noindent
\textbf{Key words:} stochastic Allen-Cahn equation, logarithmic potential, variational approach, existence and uniqueness, Yosida approximation, multiplicative noise.
\end{abstract}

{\hypersetup{linkcolor=black}
\tableofcontents}

\section{Introduction}
The study of problems where there is an interface evolving in time plays an important role in many current scientific, engineering, and industrial applications. In particular, phase field models have become an increasingly popular tool to model processes involving thin interface layers between almost homogeneous regions. These models are often referred to as ``diffuse-interface models" and describe the dynamic of interfaces by layers of small thickness. The Allen-Cahn equation belongs to this class of models and it is used to describe phase separation and the evolution of interfaces between the phases for systems without mass conservation. The Allen-Cahn equation was firstly introduced within the Van der Waals theory of phase transitions and then it was resumed in \cite{Allen} to model the growth of grains in crystalline materials near their melting points. The unknown process $u$ is called ``non-conserved order parameter" and represents the normalized density of one of the two involved phases. The sets $\{u = 1\}$ and $\{u = -1\}$ correspond to the pure regions, while the interfacial region $\{-1 < u < 1\}$ corresponds to the region where the two phases coexist and are mixed together. For a more comprehensive presentation of phase field models we refer to the monograph \cite{Brokate} and references therein.

\smallskip
The deterministic Allen-Cahn equation can be obtained as the $L^2$-gradient flow of the Ginzburg-Landau free energy associated to the order parameter $u$, which is given by 
\begin{equation}\label{freeEnergy}
\mathcal{E}(u) := \int_D \frac{1}{2} |\nabla u|^2 + F(u)
\end{equation}
Here, the function $F: \mathbb{R} \to \mathbb{R}$ is a so-called double-well potential that energetically prefers (almost) pure phases. Several choices of $F$ have been proposed in the literature, for example possible choices are
\begin{alignat}{2}
& F_{\text{pol}}(r) := \frac{1}{4}(1-r^2)^2, &&  \quad r \in \mathbb{R} \label{polPotential}\\
& F_{\log}(r) := (1+r) \ln(1+r)+(1-r)\ln(1-r)-c r^2, &&  \quad r \in (-1, 1), \quad c > 1 \label{logPotential}
\end{alignat} 
which are usually refereed to as polynomial and logarithmic potential, respectively. The polynomial potential $F_{\text{pol}}$ is certainly easy to handle from the mathematical point of view, however it is only an approximation of the logarithmic potential $F_{\log}$ which is the most relevant choice, in terms of thermodynamical consistency. The function \eqref{logPotential} has two global minima in the interior of the physically relevant domain $[-1,1]$, which is coherent with the physical interpretation of diffuse-interface modeling in which only the values of the variable $u$ in $[-1, 1]$ are meaningful. Let us observe that minimizing the free energy functional $\mathcal{E}$ corresponds to minimize its two components. Therefore candidates for minimizers of \eqref{freeEnergy} prefer on the one hand to take values in one of the two minima of $F$ and on the other hand to have few oscillations between the two pure phases, in order to keep the gradient small.

\smallskip
Given a Lipschitz bounded domain $D \subset \mathbb{R}^d$ ($d = 2, 3$), with boundary $\Gamma$, and a fixed final time $T > 0$, the $L^2$-gradient flow associated to the free energy \eqref{freeEnergy} is given by the semilinear parabolic PDE of the form
\begin{equation}\label{gradientFlow}
\partial_t u - \Delta u + F'(u) = 0   \quad \text{ in }  (0, T) \times D
\end{equation}
where $\Delta$ stands for the Laplacian acting on the space variables. 
Adding a given external force $g$ acting on the system, and completing it with homogeneous Neumann boundary condition and an initial condition, we obtain the following problem
\begin{alignat*}{2}
& \partial_t u - \Delta u + F'(u) = g   \quad && \text{ in }  (0, T) \times D\\
& \partial_{{\bf n}} u = 0   \quad && \text{ in }  (0, T) \times \Gamma\\
& u(0) = u_0 \quad && \text{ in }  D
\end{alignat*}
were, ${\bf n}$ denotes the outward normal unit vector on $\Gamma$. Here the boundary condition can be interpreted as a null interaction of the phase-transition with the hard walls.

\smallskip
The mathematical literature on the deterministic Allen-Cahn equation is extremely developed. Recently, physicists have introduced the so-called dynamic boundary conditions in order to account also for possible interaction with the walls in a confined system. In this framework, let us mention e.g.\ \cite{Orrieri9, Orrieri12, Orrieri16} and references therein where well-posedness and optimal control problems are studied.

\smallskip
While the deterministic Allen-Cahn equation provides a good description of the evolution of the phase separation, on the other hand it is not effective in taking into account the effects due to the thermal fluctuations of the system. These can be accounted by adding a Wiener type noise in the equation itself, hence obtaining the stochastic Allen-Cahn equation, which reads
\begin{equation}\label{stochasticGradientFlow}
du(t) -\Delta u(t) dt +F'(u(t))dt = g(t) dt +B(u(t))dW(t)
\end{equation}
where $W$ is a cylindrical Wiener process defined on a certain separable Hilbert space and $B$ is a suitable stochastically integrable operator with respect to $W$.

\smallskip
The mathematical literature on stochastic Allen-Cahn equations has also been increasingly developed. From the mathematical point of view, the stochastic Allen-Cahn (and similarly Cahn-Hilliard) equation has been studied mainly in the case of polynomial potentials. Concerning the stochastic Cahn-Hilliard equation, one of the first contributions in this direction is \cite{ScarpaDeg25}, in which the authors show existence of solutions via a semigroup approach in the case of polynomial potentials. A more general framework, allowing for rapidly growing potentials, has been analyzed in \cite{ScarpaDeg63, ScarpaDeg64} from a variational approach and in \cite{ScarpaDeg} where the author adopts a technique similar to the one used in this work. The case of logarithmic potentials has also been covered in the works \cite{ScarpaDeg28, ScarpaDeg29} by means of so-called reflection measures. Concerning specifically the stochastic Allen-Cahn equation, we refer here to \cite[Sec.\ 13.4]{DaPrato} in which a comprehensive treatment of the existing literature is discussed. Let us point out \cite{Rockner} and \cite{Manthey} in which well-posedness results in the case of polynomial potentials are presented, \cite{ScarpaDeg1} dealing with unbounded noise, \cite{Orrieri} dealing with dynamic boundary conditions and \cite{ScarpaDeg4} where the noise is considered to be finite-dimensional. Moreover, let us also mention \cite{Cerrai} in which the author shows existence and uniqueness of a mild solution for a general class of reaction-diffusion systems with reaction term having at most polynomial growth, and \cite{Otto} in which the sharp-interface limit of the stochastic Allen-Cahn equation with polynomial potential is taken into consideration. Regarding general well-posedness results for SPDEs, let us mention \cite{Gess} and the references therein, where unique existence of analytical strong solutions for a large class of SPDEs of gradient type is exhibited. However, in that case, a crucial hypothesis used by the author forbids exponential growth of the potential and so our specific case is not contemplated. To the best of our knowledge, the genuine case of logarithmic potential is not presented in the literature and the main novelty contained in this article is the possibility of dealing with the stochastic Allen-Cahn equation in the case of the thermodynamical relevant potential.

\smallskip
We are interested in studying the stochastic partial differential equation \eqref{stochasticGradientFlow} with logarithmic potential $F_{\log}$. The first main difficulty arising in the study of this SPDE is that the derivative of the logarithmic potential blows up at $\pm 1$ preventing us to rely on the classical variational approach by Pardoux, Krylov and Rozovskii (cf. \cite{Pardoux, Krylov, Rockner}). Indeed, in order to study equations of the form \eqref{stochasticGradientFlow} by means of the variational theory, it is necessary that the derivative of $F$ has a polynomially controlled growth, which is not the case when dealing with $F_{\log}$. The second main difficulty is that, in the stochastic scenario, the presence of the noise term in equation \eqref{stochasticGradientFlow} could bring the parameter $u$ outside the physical relevant domain $[-1,1]$ and so outside the domain in which $F_{\log}$ is defined.

\smallskip
The main idea to overcome these problems is to consider a multiplicative Wiener noise, that allows us to bound the parameter $u$, and to smooth out the equation via a Yosida approximation of the singular part. More specifically, the operator $B$, which we suppose to depend on the parameter $u$ itself, is assumed to satisfy a suitable Lipschitz-continuity property and to vanish at the extremal points of the physical relevant domain, i.e.\ at $\pm 1$. Intuitively this means that the noise is ``shut-down" when the parameter $u$ reaches the extremal points $\pm 1$, in order to confine it inside the interval $(-1,1)$. To study \eqref{stochasticGradientFlow} with this type of noise, we consider a regularized equation in which the monotone increasing part of $F'_{\log}$ is substitute by its Yosida approximation. The approximated system satisfies the usual assumptions of the classical variational theory and therefore well-posedness can be retrieved. Then, through monotonicity arguments and using the assumptions on the noise, uniform estimates can be closed and a passage to the limit provides a solution to the original problem. 

\smallskip
Let us briefly present the structure of the paper. We consider the stochastic Allen-Cahn system with homogeneous Neumann boundary condition and a prescribed initial condition in the form
\begin{equation}\label{acSpde}
\begin{alignedat}{2}
& du(t) -\Delta u(t) dt +F'(u(t))dt = g(t) dt +B(u(t))dW(t)  \quad && \text{ in } \quad (0, T) \times D\\
& \partial_{{\bf n}} u = 0   \quad && \text{ in } \quad (0, T) \times \Gamma\\
& u(0) = u_0 \quad && \text{ in } \quad D
\end{alignedat}
\end{equation}
where $W$ is a cylindrical Wiener process on a certain separable Hilbert space $U$ and $B$ takes values in the space of Hilbert-Schmidt operators from $U$ to $L^2(D)$. In Section \ref{notAssRes} we fix the main assumptions that will be used throughout the work, we precise the concept of variational solution and analytically strong solution, and we state the main results. In Section \ref{approxProb} we introduce the approximated problem and we prove its well-posedness. In Section \ref{uniformEstimates} we derive estimates in expectation of the solution to the approximated problem. In Section \ref{proofMainResult} we prove the main results: a variational solution to the original problem is obtained by passing to the limit in suitable topologies; we prove the continuous dependence on the initial datum of the solution; we derive an additional estimate in order to prove the existence of an analytically strong solution; we derive estimates in expectation on the derivatives of the logarithmic potential in view of the study of an optimal control problem associated to the stochastic Allen-Cahn equation.

\section{Notation, assumptions, and main results} \label{notAssRes}
\subsection{Notation}
For any real Banach space $E$, its dual will be denoted by $E^{*}$. The norm in $E$ and the duality paring between $E$ and $E^{*}$ will be denoted by $\|\, \cdot \, \|_E$ and $\langle \, \cdot \, , \, \cdot \, \rangle_E$, respectively. If $(A, \mathcal{A}, \mu)$ is a finite measure space, we denoted with $L^p(A; E)$ the space of $p$-Bochner integrable functions, for any $p \in [1, \infty]$. We will use the symbol $C^{0}([0, T]; E)$ for the space of strongly continuous functions from $[0, T]$ to $E$, with $T > 0$ being a fixed final time. If $E_1$ and $E_2$ are separable Hilbert space, we denote with $\mathcal{L}(E_1, E_2)$ and $\mathcal{L}^2(E_1, E_2)$ the spaces of linear and continuous operators and of Hilbert-Schmidt operators from $E_1$ to $E_2$, respectively. Moreover $(\Omega, \mathcal{F}, (\mathcal{F}_t)_{t \in [0, T]}, \mathbb{P})$ denotes a filtered probability space satisfying the usual conditions (i.e. it is saturated and right continuous) and $W$ is a cylindrical Wiener process valued on a separable Hilbert space $U$. We fix a complete orthonormal system $(u_k)_{k \in \mathbb{N}}$ of $U$. 

\smallskip
As we have anticipated, let $D \subset \mathbb{R}^d$ ($d =2, 3$) be a Lipschitz bounded domain with boundary $\Gamma$ whose outward normal unit vector is denoted with ${\bf n}$. The Lebesgue measure of $D$ will be denoted by $|D|$. We define the following functional spaces
\begin{equation*}
H : = L^2(D), \qquad V : = H^1(D), \qquad V_0 : = \{v \in H^2(D) \, : \, {\bf n} \cdot \nabla v = 0 \text{ a.e.\ on } \Gamma\}
\end{equation*}  
endowed with their natural norms $\| \, \cdot \, \|_H$, $\| \, \cdot \, \|_V$, and $\| \, \cdot \, \|_{V_0}$, respectively. Identifying the Hilbert space $H$ with its dual trough the Riesz-isomorphism theorem, we have the following continuous, dense, and compact inclusions 
\begin{equation*}
V_0 \hookrightarrow V \hookrightarrow H \equiv H^{*} \hookrightarrow V^{*} \hookrightarrow V_0^{*}
\end{equation*}
In particular, we have that $(V, H, V^{*})$ constitutes a Gelfand triple. Let us recall that the Laplace operator, with homogeneous Neumann condition, can be seen as a variational operator
\begin{equation*}
-\Delta : V \to V^{*}, \quad \langle -\Delta u, v \rangle_V : = \int_D \nabla u \nabla v, \quad \forall u, v \in V
\end{equation*}
In the sequel we will use the same symbol $-\Delta$ to denote the Laplace operator intended both as a variational operator and as an operator defined from $V_0$ valued in $H$.

\smallskip
Finally, we will use the symbol $a$ to denote any arbitrary positive constant depending only on the data of the problem, whose value may be updated throughout the proofs. 

\subsection{Assumptions}
We precise here the assumptions that are used throughout the work.

\smallskip
\noindent
\textbf{$\bf{(A1)}$ - Assumptions on the double-well potential.} We assume the potential to be of logarithmic type. Specifically, letting $F_{\log}$ as in \eqref{logPotential}, we define 
\begin{equation*}
F: (-1, 1)\to \mathbb{R}, \quad F(r) := F_{\log}(r)+ K, \quad r \in (-1, 1) 
\end{equation*}
where $K > 0$ is a positive constant such that $F(r)$ is non-negative for all $r \in (-1,1)$. Therefore, we have that 
\begin{equation*}
F'(r) = F'_{\log}(r) = \ln\left(\frac{1+r}{1-r}\right) - 2 c r, \quad r \in (-1, 1)
\end{equation*}
We define 
\begin{equation}
\beta(r) : = \ln\left(\frac{1+r}{1-r}\right), \quad r \in (-1, 1)
\end{equation} 
then $\beta: (-1, 1) \to \mathbb{R}$ is a monotone increasing and continuous function such that $\beta(0)= 0$, and so it can be identified with a maximal monotone graph in $\mathbb{R} \times \mathbb{R}$. 

\smallskip
\noindent
\textbf{$\bf{(A2)}$ - Assumptions on the noise.} We consider a multiplicative type noise with diffusion coefficient $B$. In particular, let $H_1 = \{x \in H \, : \, \|x\|_{L^{\infty}(D)} \leq 1\}$ and consider $(h_k)_{k \in \mathbb{N}} \subset W^{1, \infty}(-1, 1)$ such that for any $k \in \mathbb{N}$, $h_k(\pm 1) = 0$. Then the operator $B : H_1 \to \mathcal{L}^2(U, H)$ is such that
\begin{equation}\label{defBuk}
B(x)u_k := h_k(x), \quad  \forall x \in H_1, \quad \forall k \in \mathbb{N}
\end{equation} 
We assume further that 
\begin{equation}\label{CB}
C_B : = \sum_{k \in \mathbb{N}} \|h_k\|_{W^{1, \infty}(-1, 1)}^2 < \infty
\end{equation}

\smallskip
\noindent
\textbf{$\bf{(A3)}$ - Assumptions on the initial datum.} We assume that 
\begin{equation}\label{condU0}
u_0 \in L^2(\Omega, \mathcal{F}_0;H) \quad \text{ and } \quad  F(u_0) \in L^1(\Omega, \mathcal{F}_0; L^1(D))
\end{equation}
Moreover, we consider the external force $g: [0, T] \times \Omega \to H$ to be progressively measurable and such that 
\begin{equation}\label{condG}
g \in L^2(\Omega; L^2(0, T; H))
\end{equation}
\begin{remark}
The conditions in assumption $(A2)$ ensure that the map
\begin{equation}\label{defB}
B : H_1 \to \mathcal{L}^2(U, H), \quad  B(x)u := \sum_{k \in \mathbb{N}} (u, u_k)_{U} h_k(x), \quad x \in H_1, \quad u \in U
\end{equation}
is well-defined, measurable, and Lipschitz-continuous. Let us first observe that the series in \eqref{defB} is convergent in $H$ for any $x \in H_1$ and $u \in U$, indeed we have
\begin{align*}
\left\|\sum_{k \in \mathbb{N}} (u, u_k)_U h_k(x)\right\|_H & \leq \sum_{k \in \mathbb{N}} |(u,u_k)_U| \, \|h_k(x)\|_H\\
&  \leq \left(\sum_{k \in \mathbb{N}}\|h_k(x)\|_H^2\right)^{1/2} \left(\sum_{k \in \mathbb{N}}|(u, u_k)_U|^2\right)^{1/2}\\
&  \leq (C_B |D|)^{1/2} \|u\|_U < \infty
\end{align*}
From this estimate it also follows that, for any $x \in H_1$, the linear operator $B(x):U \to H$ is continuous. Now we want to show that for every $x \in H_1$ the operator $B(x): U \to H$ is Hilbert-Schmidt, indeed we have  
\begin{equation*}
\|B(x)\|_{\mathcal{L}^2(U, H)}^2 = \sum_{k \in \mathbb{N}} \|B(x)u_k\|_H^2 = \sum_{k \in \mathbb{N}} \|h_k(x)\|_{H}^2 \leq C_B |D|
\end{equation*}
Moreover, let $x$, $y \in H_1$, then 
\begin{equation*}
\|B(x)-B(y)\|_{\mathcal{L}^2(U, H)} = \sum_{k \in \mathbb{N}} \|h_k(x)-h_k(y)\|_H^2 \leq C_B \|x-y\|_H^2
\end{equation*}
and so $B:H_1 \to \mathcal{L}^2(U,H)$ is Lipschitz-continuous.
\end{remark}

Here we also precise the assumptions that we will use to prove the estimates on the derivatives of the logarithmic potential. Let us observe that for every $n \geq 2$ it exits a positive constant $a_n > 0$ such that 
\begin{equation*}
|F^{(n)}(r)| \leq \frac{a_n}{(1-r^2)^{n-1}}, \quad r \in (-1, 1)
\end{equation*}
Therefore, if we define the function $G_n: (-1, 1) \to \mathbb{R}$ as follows
\begin{equation}\label{Gn}
G_n(r) := \frac{1}{(1-r^2)^{n-1}}, \quad r \in (-1, 1)
\end{equation}
deriving estimates on $G_n$ is equivalent to derive estimates on $|F^{(n)}|$. Further, let us also notice that there exit positive constants $a'_n$ and $a''_n$ such that 
\begin{equation} \label{condDerGn}
|G'_n(r)| \leq \frac{a'_n}{(1-r^2)^n} \quad \text{ and } \quad G''_n(r) \leq \frac{a''_n}{(1-r^2)^{n+1}}, \quad r \in (-1,1)
\end{equation}
and so, in particular, it follows that estimating $|G'_n|$ coincides with estimating $G_{n+1}$. We are now ready to state our assumptions.

\smallskip
\noindent
\textbf{$\bf{(A2)_n}$ - Assumptions on the noise.} We assume that the operator $B:H_1 \to \mathcal{L}^2(U, H)$ is still defined as in \eqref{defB} however we require that $(h_k)_{k \in \mathbb{N}} \subset W^{n+1, \infty}(-1,1)$ is a family of functions such that, for any $k \in \mathbb{N}$ and for any $j =0, \dots, n$, it holds that $h^{(j)}_k(\pm 1) = 0$ and 
\begin{equation}\label{Cn}
C_{n} := \sum_{k \in \mathbb{N}} \|h_k\|^2_{W^{n+1, \infty}(-1,1)} < \infty
\end{equation}

\smallskip
\noindent
\textbf{$\bf{(A3)_n}$ - Assumptions on the initial datum.} Beyond assumption $(A3)$ we also assume that the initial datum satisfies 
\begin{equation*}
G_n(u_0) \in L^1(\Omega, \mathcal{F}_0; L^1(D))
\end{equation*}
and that the external force $g$ satisfies
\begin{equation*}
||g||_{L^{\infty}(D)} \leq 1 \quad \text{ a.e.\ in } [0, T] \times \Omega
\end{equation*}

\begin{remark}
Let us observe that for any $n \geq 2$, assumptions $(A2)_n$ and $(A3)_n$ trivially implies assumptions $(A2)$ and $(A3)$.
\end{remark}

\subsection{Main results}
We start by giving the definition of variational solution and analytically strong solution to problem \eqref{acSpde}.
\begin{definition}\label{strongSol}
A variational solution to problem \eqref{acSpde} is a process $u$ such that 
\begin{equation*}
u \in L^2(\Omega; C^0([0,T]; H)) \cap L^2(\Omega; L^2(0, T; V)), \quad F'(u) \in L^2(\Omega; L^2(0, T; H)) 
\end{equation*}
and for all $v \in V$ it holds that 
\begin{equation}\label{varSol}
\begin{split}
\int_D u(t) v  & +\int_0^t \int_D \nabla u(s) \nabla v ds +\int_0^t \int_D F'(u(s)) v ds \\
& =  \int_D u_0 v + \int_0^t \int_D g(s) v ds +  \int_D \left(\int_0^t B(u(s)) dW(s)\right) v
\end{split}
\end{equation}
for every $t \in [0, T]$, $\mathbb{P}$-a.s.
\end{definition}
\begin{definition}\label{analyticalStrongSol}
An analytically strong solution to problem \eqref{acSpde} is a process $u$ such that 
\begin{equation*}
u \in L^2(\Omega; C^0([0,T]; H)) \cap L^2(\Omega; L^2(0, T; V_0)) \cap L^2(\Omega; L^{\infty}(0, T; V)), \quad F'(u) \in L^2(\Omega; L^2(0, T; H)) 
\end{equation*}
and
\begin{equation*}
u(t) - \int_0^t \Delta u(s) ds +\int_0^t F'(u(s)) ds = u_0 + \int_0^t g(s) ds + \int_0^t B(u(s)) dW(s)
\end{equation*}
for every $t \in [0, T]$, $\mathbb{P}$-a.s.
\end{definition}
\noindent
We are now ready to state our main results, namely the following theorems.
\begin{theorem}
Under assumptions $(A1)$-$(A3)$, there exists a unique variational solution to \eqref{acSpde} in the sense of Definition \ref{strongSol}. Furthermore, it exists a constant $a$ such that for any initial data $(u_0^1, g_1)$, $(u_0^2, g_2)$ satisfying assumption $(A3)$, the respective variational solutions $u_1$, $u_2$ of \eqref{acSpde} satisfy 
\begin{equation}\label{continuousDep}
\|u_1-u_2\|_{L^2(\Omega; C^0([0, T]; H)) \cap L^2(\Omega; L^2(0, T; V))} \leq a \left( \|u_0^1-u_0^2\|_{L^2(\Omega; H)} + \|g_1-g_2\|_{L^2(\Omega; L^2(0, T; V^{*}))} \right)
\end{equation}
Moreover, under assumptions $(A1)$-$(A3)$, if the initial datum $u_0 \in L^2(\Omega, \mathcal{F}_0; V)$, then it exists a unique analytically strong solution to \eqref{acSpde} in the sense of Definition \eqref{analyticalStrongSol}.
\end{theorem}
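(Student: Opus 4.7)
The plan is to follow the roadmap sketched in the introduction: regularise the singular part of the potential and extend the noise coefficient so as to land in the classical variational framework, close $\lambda$-independent estimates that exploit the vanishing of $h_k$ at $\pm 1$, pass to the limit by monotonicity, derive continuous dependence by Ito's formula, and finally upgrade to analytically strong solution under the extra regularity of $u_0$. Concretely, I would replace $\beta$ by its Yosida approximation $\beta_\lambda$, set $F'_\lambda(r) := \beta_\lambda(r) - 2cr$, and use the fact that the assumptions $h_k \in W^{1,\infty}(-1,1)$ with $h_k(\pm 1)=0$ allow extending each $h_k$ by zero to $\mathbb{R}$ without enlarging its $W^{1,\infty}$ norm; this defines a Lipschitz extension $\tilde B : H \to \mathcal{L}^2(U, H)$. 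The approximated equation
\begin{equation*}
du_\lambda - \Delta u_\lambda\, dt + F'_\lambda(u_\lambda)\, dt = g\, dt + \tilde B(u_\lambda)\, dW, \qquad u_\lambda(0) = u_0,
\end{equation*}
then meets the coercivity, Lipschitz and linear-growth conditions of the Pardoux/Krylov--Rozovskii variational theory and admits a unique $u_\lambda \in L^2(\Omega; C^0([0,T]; H)) \cap L^2(\Omega; L^2(0,T; V))$.

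\textbf{Uniform estimates.} Ito's formula for $\tfrac12\|u_\lambda\|_H^2$ yields the standard energy bound; the decisive step is applying Ito to $\int_D F_\lambda(u_\lambda)$, where $F_\lambda$ is a primitive of $F'_\lambda$. The Ito correction equals $\tfrac12 \sum_k \int_D F''_\lambda(u_\lambda)\, h_k(u_\lambda)^2$, and the mean-value bound $|h_k(r)| \le \|h_k\|_{W^{1,\infty}} \sqrt{1-r^2}$ (coming from the Lipschitz regularity of $h_k$ together with $h_k(\pm 1)=0$) exactly cancels the singularity of $F''$ near $\pm 1$, leaving a contribution bounded by $C_B|D|$ uniformly in $\lambda$. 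Combining this with the energy identity and testing against $F'_\lambda(u_\lambda)$ one obtains uniform controls on $u_\lambda$ in $L^2(\Omega; L^\infty(0,T; H)) \cap L^2(\Omega; L^2(0,T; V))$ and on $F'_\lambda(u_\lambda)$ in $L^2(\Omega; L^2(0,T; H))$.

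\textbf{Passage to the limit, uniqueness, and continuous dependence.} This is the main obstacle, since $F'_\lambda$ blows up and no a priori Lipschitz control survives the limit. I would combine pathwise uniqueness with a Gy\"ongy--Krylov argument. Pathwise uniqueness, and simultaneously the estimate \eqref{continuousDep}, follows by applying Ito to $\tfrac12\|u_1-u_2\|_H^2$ for two solutions, using the monotonicity of $\beta$ to drop the potential contribution, the Lipschitz character of $\tilde B$ to absorb the quadratic variation, and Gronwall's lemma. For existence, I would establish tightness of the joint laws of $(u_\lambda, u_\mu)$ on a suitable path space (Aubin--Lions in space combined with temporal equicontinuity read off from the equation), extract an almost surely convergent subsequence on an enlarged probability space via Skorokhod, and identify the limit of $F'_\lambda(u_\lambda)$ with $F'(u)$ by a Minty-type monotonicity argument, from which the confinement $u \in (-1,1)$ a.e.\ is also deduced. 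Gy\"ongy--Krylov then promotes the convergence back to the original probability space.

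\textbf{Analytically strong solution.} When $u_0 \in L^2(\Omega; V)$, the additional regularity is obtained at the approximated level by applying Ito to $\tfrac12\|\nabla u_\lambda\|_H^2$, equivalently testing against $-\Delta u_\lambda$. The monotone contribution $\int_D F''_\lambda(u_\lambda) |\nabla u_\lambda|^2$ is non-negative and can be discarded, while the Ito correction $\tfrac12 \sum_k \int_D |h_k'(u_\lambda)|^2 |\nabla u_\lambda|^2 \le C_B \|\nabla u_\lambda\|_H^2$ is absorbed by Gronwall, yielding a uniform bound in $L^2(\Omega; L^\infty(0,T; V))$; combined with the $H$-bound on $F'_\lambda(u_\lambda)$ and comparison in the equation this gives $u_\lambda$ bounded in $L^2(\Omega; L^2(0,T; V_0))$, and this regularity passes to the limit, concluding the proof.
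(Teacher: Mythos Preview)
Your plan is correct and would lead to a complete proof, but the passage to the limit is organized very differently from the paper's, and the paper's route is considerably simpler. You invoke tightness, Skorokhod representation, a Minty identification, and then Gy\"ongy--Krylov to return to the original probability space. The paper avoids all of this stochastic compactness machinery: it shows directly that $(u_\lambda)_\lambda$ is a \emph{Cauchy sequence} in $L^2(\Omega; C^0([0,T];H)) \cap L^2(\Omega; L^2(0,T;V))$ by applying It\^o's formula to $\tfrac12\|u_\lambda - u_\epsilon\|_H^2$, expanding $u_\lambda - u_\epsilon = (J_\lambda u_\lambda - J_\epsilon u_\epsilon) + (\lambda\beta_\lambda(u_\lambda) - \epsilon\beta_\epsilon(u_\epsilon))$, using the monotonicity of $\beta$ on the first piece, and controlling the remainder by the already-established uniform bound on $\|\beta_\lambda(u_\lambda)\|_{L^2}$. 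This yields strong convergence on the original probability space in one stroke; the weak limit of $\beta_\lambda(u_\lambda)$ is then identified with $\beta(u)$ by the strong--weak closedness of the maximal monotone graph, without any Minty trick. Your approach is more robust (it would survive, e.g., a merely continuous rather than Lipschitz $B$), but for the present Lipschitz setting the Cauchy argument is both shorter and delivers strong convergence directly.

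Two smaller remarks. First, the paper extends $B$ to all of $H$ not by zero-extending $h_k$ but by setting $B_\lambda(x) := B(J_\lambda(x))$; this has the advantage that in the It\^o correction for $\int_D F_\lambda(u_\lambda)$ the argument of $h_k$ and the argument of $\beta'$ coincide (both are $J_\lambda(u_\lambda)$), so the cancellation $|h_k(J_\lambda u_\lambda)|^2/(1 - (J_\lambda u_\lambda)^2) \le \|h_k'\|_\infty^2$ is immediate. Your zero-extension also works, but requires the extra observation $|J_\lambda(r)| \le |r|$ to match the denominators. Second, in the strong-solution step the term $\int_D F''_\lambda(u_\lambda)|\nabla u_\lambda|^2$ is not non-negative as you state: only $\beta'_\lambda \ge 0$, while the $-2c$ part contributes $-2c\|\nabla u_\lambda\|_H^2$. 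This is harmless (it is absorbed by Gronwall, exactly as the paper does), but the sentence as written is inaccurate.
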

\begin{remark}
Note that \eqref{continuousDep} implies the pathwise uniqueness of the solution. Indeed if $u_0^1 = u_0^2$ and $g_1 = g_2$, then
\begin{equation*}
u_1(t) = u_2(t) \quad \forall t \in [0, T], \quad \mathbb{P}\text{-a.s.}
\end{equation*} 
and so the uniqueness follows.
\end{remark}
\begin{theorem}
Fix $n \geq  2$, under assumptions $(A1)$, $(A2)_n$ and $(A3)_n$, if $u$ denotes the variational solution to \eqref{acSpde}, then it exists a constant $a_n$ such that 
\begin{equation}\label{estimateDerLog}
\|F^{(n)}(u)\|_{L^{\infty}(0, T; L^1(\Omega; D))} + \|F^{(n+1)}(u)\|_{L^1(\Omega; L^1(0, T; D)} \leq a_n
\end{equation}
\end{theorem}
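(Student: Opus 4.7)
The plan is to derive the estimate at the level of the approximated problem introduced in Section \ref{approxProb}, by applying It\^o's formula to the functional $x\mapsto \int_D G_n(x) dx$ composed with the Yosida-regularized solution $u_\lambda$, and then to pass to the limit $\lambda\to 0$ using the convergence results of Section \ref{proofMainResult}. Since $G_n$ is only defined on $(-1,1)$, I would first introduce a bounded convex truncation $G_n^\varepsilon \in C^2(\mathbb{R})$ that increases pointwise to $G_n$ on $(-1,1)$ and preserves the pointwise bounds \eqref{condDerGn}; these auxiliary functions will be removed at the end by monotone convergence, exploiting that the limit $u$ takes values in $(-1,1)$ almost everywhere. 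Taking expectation in It\^o's formula for $\int_D G_n^\varepsilon(u_\lambda(t)) dx$ and integrating by parts in the dissipative term yields, on the left-hand side, the non-negative quantity $\E\int_D G_n^\varepsilon(u_\lambda(t)) dx + \E\int_0^t\int_D (G_n^\varepsilon)''(u_\lambda)|\nabla u_\lambda|^2$ together with the reaction contribution $\E\int_0^t\int_D (G_n^\varepsilon)'(u_\lambda)\beta_\lambda(u_\lambda)$. Since $r\beta(r)\geq 0$ and $r\beta(r)\to +\infty$ as $|r|\to 1$, one verifies the one-sided bound $G_n'(r)\beta(r) \geq a\,G_{n+1}(r) - a$ on $(-1,1)$; this inequality passes through the Yosida approximation and delivers precisely the $L^1$-control of $G_{n+1}(u_\lambda)$ required for \eqref{estimateDerLog}.

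The remaining deterministic terms on the right, coming from $2cu_\lambda$ and $g$, can be absorbed into the reaction contribution thanks to $\|g\|_{L^\infty}\leq 1$ from $(A3)_n$ and the blow-up of $\beta$: on the set where $|u_\lambda|$ is close enough to $1$ one has $|2cu_\lambda + g|\leq \tfrac{1}{2}|\beta_\lambda(u_\lambda)|$, while on the complement $(G_n^\varepsilon)'(u_\lambda)$ is uniformly bounded and the contribution is controlled by a constant. The hard part is the control of the It\^o correction $\tfrac{1}{2}\E\int_0^t\int_D (G_n^\varepsilon)''(u_\lambda)\sum_k h_k(u_\lambda)^2$, and this is precisely where assumption $(A2)_n$ is used at full strength. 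The hypothesis $h_k^{(j)}(\pm 1)=0$ for $j=0,\dots,n$, combined with Taylor's theorem, yields the pointwise estimate $|h_k(r)|\leq a\,\|h_k\|_{W^{n+1,\infty}(-1,1)}\,(1-r^2)^{n+1}$ on $(-1,1)$; together with $(G_n^\varepsilon)''(r)\leq a_n''/(1-r^2)^{n+1}$ and the summability \eqref{Cn} this produces the uniform bound $(G_n^\varepsilon)''(u_\lambda)\sum_k h_k(u_\lambda)^2 \leq a_n\,(1-u_\lambda^2)^{n+1} \leq a_n$. The exact matching between the blow-up order $n+1$ of $G_n''$ and the vanishing order $n+1$ of the $h_k$ is what makes the estimate closable.

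Once all the terms are controlled, a Gronwall argument yields uniform in $\lambda$ and $\varepsilon$ bounds on $\sup_{t\in[0,T]}\E\int_D G_n^\varepsilon(u_\lambda(t)) dx$ and on $\E\int_0^T\int_D G_{n+1}^\varepsilon(u_\lambda)dx ds$. Passing first to the limit $\lambda\to 0$ by lower semicontinuity (Fatou's lemma), using the strong convergence of $u_\lambda$ to $u$ established in Section \ref{proofMainResult}, and then $\varepsilon\to 0$ by monotone convergence (since $u\in(-1,1)$ almost everywhere), one recovers \eqref{estimateDerLog} after recalling $|F^{(n)}|\leq a_n G_n$.
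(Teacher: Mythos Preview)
Your approach has a genuine gap in the treatment of the It\^o correction. At the approximated level the noise is $B_\lambda(u_\lambda)u_k = h_k(J_\lambda(u_\lambda))$, not $h_k(u_\lambda)$ as you write; hence the correction term reads $(G_n^\varepsilon)''(u_\lambda)\sum_k|h_k(J_\lambda(u_\lambda))|^2$, with the second derivative evaluated at $u_\lambda$ and the $h_k$ evaluated at $J_\lambda(u_\lambda)$. The Taylor expansion of $h_k$ around $\pm 1$ gives $|h_k(J_\lambda(u_\lambda))|^2\lesssim(1-(J_\lambda u_\lambda)^2)^{n+1}$, while \eqref{condDerGn} gives $(G_n^\varepsilon)''(u_\lambda)\lesssim (1-u_\lambda^2)^{-(n+1)}$ on $\{|u_\lambda|<1\}$. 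Since $|J_\lambda(r)|\leq|r|$ one only has $1-(J_\lambda u_\lambda)^2\geq 1-u_\lambda^2$, which is the wrong inequality for an upper bound on the quotient; in fact for fixed $\lambda$ the ratio $(1-(J_\lambda r)^2)/(1-r^2)$ blows up as $|r|\to 1$. Therefore the pointwise cancellation you describe does not yield a bound uniform in $(\lambda,\varepsilon)$.

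The paper avoids this mismatch by applying It\^o's formula directly to $\int_D G_n(u)$ for the \emph{limit} solution $u$, regularising only $G_n$ (via the Yosida approximation of $G_n'$) to make the computation rigorous. Since $|u|<1$ a.e.\ and the noise coefficient is genuinely $h_k(u)$ with no resolvent involved, both factors in the correction term are evaluated at the same point and the cancellation $(1-u^2)^{-(n+1)}(1-u^2)^{n+1}=1$ goes through exactly. Your argument can be repaired by reordering the limits---first pass $\lambda\to 0$ for fixed $\varepsilon$ using only the crude bound $(G_n^\varepsilon)''\leq C_\varepsilon$, obtain the It\^o identity for $G_n^\varepsilon(u)$ by dominated convergence in each term, then perform the uniform-in-$\varepsilon$ estimate on this identity where the cancellation is now available, and finally let $\varepsilon\to 0$---but this is essentially the paper's strategy of working with $u$ rather than $u_\lambda$. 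Incidentally, no Gronwall step is needed here: once the correction and forcing terms are bounded by constants, the estimate follows directly.
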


\section{The approximated problem}\label{approxProb}
Let us observe that problem \eqref{acSpde} cannot be studied in the Gelfand triple $(V, H, V^{*})$ by means of the classical variational theory because the drift is not well-defined as a $V^{*}$-valued operator. To overcome this problem we consider a suitable regularized equation and, in this section, we prove its well-posedness.

\smallskip
For any $\lambda \in (0, 1)$ and $r \in \mathbb{R}$, let 
\begin{equation*}
J_{\lambda}(r) := (I+\lambda \beta)^{-1}(r) \quad \text{ and } \quad \beta_\lambda(r) := \lambda^{-1}(I - J_{\lambda})(r) 
\end{equation*}
be the resolvent and the Yosida approximation of the maximal monotone graph $\beta \subset \mathbb{R} \times \mathbb{R}$, respectively. 
\begin{remark}
We state here the main properties of $J_{\lambda}$ and $\beta_{\lambda}$ that we will use in the sequel (for a more comprehensive treatment on the topic we refer to \cite{Barbu}):
\begin{itemize}
\item[-] $J_{\lambda}: \mathbb{R} \to (-1,1)$ is non-expansive, i.e.\ it is Lipschitz-continuous with Lipschitz constant not greater than $1$; $\beta_{\lambda}: \mathbb{R} \to \mathbb{R}$ is Lipschitz-continuous with Lipschitz constant not greater than $\lambda^{-1}$;
\item[-] for any $r \in \mathbb{R}$, $\beta_{\lambda}(r) = \beta(J_{\lambda}(r))$;
\item[-] for any $r \in (-1, 1)$, $\lim_{\lambda \to 0^{+}} J_{\lambda} (r) = r$, and $\lim_{\lambda \to 0^{+}} \beta_{\lambda}(r) = \beta(r)$;
\item[-] for any $r \in \mathbb{R}$, the function $\lambda \mapsto |\beta_{\lambda}(r)|$ is non-decreasing for $\lambda \searrow 0$, i.e.\ $|\beta_{\lambda}(r)| \leq |\beta_{\epsilon}(r)|$ if $\lambda > \epsilon$.
\end{itemize}
\end{remark}
\noindent
We also define the function $\hat{\beta}_{\lambda}: \mathbb{R} \to [0, \infty)$ as $\hat{\beta}_{\lambda}(r):=\int_0^r \beta_{\lambda}(s) ds$, $r \in \mathbb{R}$. With this notation we introduce the approximated potential 
\begin{equation}\label{approxPotential}
F_{\lambda}: \mathbb{R} \to [0, \infty), \quad F_{\lambda}(r) : = F(0) + \hat{\beta}_{\lambda}(r)-c r^2, \quad r \in \mathbb{R}
\end{equation}
We have that $F_{\lambda}$ is non-negative, convex, with quadratic growth, and such that $F_{\lambda}(r) \leq F(r)$, for all $r \in (-1, 1)$. Let $F'_{\lambda}$ be the derivative of $F_{\lambda}$, defined as follows
\begin{equation*}
F'_{\lambda}(r): \mathbb{R}\to \mathbb{R}, \quad F'_{\lambda}(r) := \beta_{\lambda}(r)-2 c r, \quad r \in \mathbb{R}
\end{equation*}
By the properties of the Yosida approximation $\beta_{\lambda}$, it follows that the function $F'_{\lambda}$ is Lipschitz-continuous and we denote its Lipschitz constant with $C_{F'_{\lambda}}$. We also define the operator 
\begin{equation}\label{BetaLambda}
B_{\lambda}:  H \to \mathcal{L}^2(U, H), \quad B_{\lambda}(x) := B(J_{\lambda}(x)) 
\end{equation}
where the operator $B$ is defined as in \eqref{defB}. Then the approximated problem reads
\begin{equation}\label{approximatedProblem}
\begin{alignedat}{2}
& du_{\lambda}(t)-\Delta u_{\lambda}(t) dt + F'_{\lambda}(u_{\lambda}(t)) dt = g(t)dt+B_{\lambda}(u_{\lambda}(t)) dW(t)   \quad && \text{ in } \quad (0, T) \times D  \\
& \partial_{{\bf n}} u_{\lambda} = 0   \quad && \text{ in } \quad (0, T) \times \Gamma \\
& u_{\lambda}(0) = u_0 \quad && \text{ in } \quad D
\end{alignedat}
\end{equation}
We introduce the operator $A_{\lambda} : [0, T] \times \Omega \times V \to V^{*}$ such that for $(t, \omega) \in [0, T]\times \Omega$ and $u \in V$, $A_{\lambda}(t, \omega, u): = -\Delta u + F'_{\lambda}(u)+g(t, \omega)$, or more precisely, for all $v \in V$
\begin{equation}\label{ALambda}
\langle A_{\lambda}(t, \omega, u), v \rangle_V := \int_D \nabla u \cdot \nabla v + \int_D F'_{\lambda}(u) v - \int_D g(t, \omega) v
\end{equation}
It is readily seen that the operator $A_{\lambda}$ is well-defined, i.e.\ that for all $(t, \omega) \in [0, T] \times \Omega$ and $u \in V$, $A_{\lambda}(t, \omega, u)$ is a linear and continuous functional defined on $V$. At this point we can write the approximated equation as 
\begin{equation*}
du_{\lambda}(t) + A_{\lambda}(t, u_{\lambda}(t)) dt = B_{\lambda}(u_{\lambda}(t)) dW(t) \quad \text{ in } \quad (0, T) \times V^{*}
\end{equation*}
The idea now is to study the approximated problem \eqref{approximatedProblem} using the classical variational theory by Pardoux, Krylov and Rozovskii (cf. \cite{Pardoux, Krylov, Rockner}) in the Gelfand triple $(V, H, V^{*})$. We need the following lemma.
\begin{lemma}\label{CoeffHp}
Let $\lambda \in (0,1)$. Then the operator $A_{\lambda}: [0, T] \times \Omega \times V \to V^{*}$ defined in \eqref{ALambda} is progressively measurable, hemicontinuous, weakly monotone, weakly coercive and bounded, i.e.
\begin{itemize}
\item[-] the map 
\begin{equation*}
\eta \mapsto \langle A_{\lambda}(t, \omega, u_1+\eta u_2), v \rangle_V, \quad \eta \in \mathbb{R}
\end{equation*}
is continuous for every $(t, \omega) \in [0, T] \times \Omega$, and $u_1$, $u_2$, $v \in V$;
\item[-] there exists a constant $c > 0$ such that 
\begin{equation*}
\langle A_{\lambda}(t, \omega, u_1)-A_{\lambda}(t, \omega, u_2), u_1-u_2 \rangle_V \geq -c\|u_1-u_2\|_H^2
\end{equation*}
for every $(t, \omega) \in [0, T] \times \Omega$ and $u_1$, $u_2 \in V$;
\item[-] there exist two constants $c_1$, $c_1' > 0$ and an adapted process $f_1 \in L^1([0, T] \times \Omega)$ such that
\begin{equation*}
\langle A_{\lambda}(t, \omega, u), u\rangle_V \geq c_1 \|u\|_V^2-c_1'\|u\|_H^2-f_1(t, \omega)
\end{equation*}
for every $(t, \omega) \in [0, T] \times \Omega$ and $u \in V$;
\item[-] there exist a constant $c_2 > 0$ and an adapted process $f_2 \in L^2([0, T] \times \Omega)$ such that
\begin{equation*}
\|A_{\lambda}(t, \omega, u)\|_{V^{*}} \leq c_2 \|u\|_V+f_2(t, \omega)
\end{equation*}
for every $(t, \omega) \in [0, T] \times \Omega$ and $u \in V$.
\end{itemize}
\end{lemma}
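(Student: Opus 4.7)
The plan is to verify the four properties one by one, exploiting the decomposition $A_\lambda(t,\omega,u)=-\Delta u+F'_\lambda(u)+g(t,\omega)$ in which the Laplacian is a linear continuous operator $V\to V^{*}$, the nonlinear part is a superposition operator induced by the globally Lipschitz map $F'_\lambda=\beta_\lambda-2c\,\mathrm{Id}$, and $g$ is an additive, $u$-independent forcing. This reduces everything to standard estimates plus monotonicity of the Yosida regularization $\beta_\lambda$.

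For progressive measurability, I would note that $A_\lambda$ depends on $(t,\omega)$ only through $g$, which is progressively measurable with values in $H\hookrightarrow V^{*}$. The deterministic part $u\mapsto -\Delta u+F'_\lambda(u)$ is continuous from $V$ to $V^{*}$ (by Lipschitzianity of $F'_\lambda$ and the Sobolev embedding $V\hookrightarrow H$), hence measurable. Therefore $A_\lambda$ inherits progressive measurability componentwise. For hemicontinuity, the map $\eta\mapsto \int_D\nabla(u_1+\eta u_2)\cdot\nabla v$ is affine in $\eta$, while $\eta\mapsto\int_D F'_\lambda(u_1+\eta u_2)\,v$ is continuous by dominated convergence: the integrand converges pointwise a.e., and Lipschitzianity of $F'_\lambda$ together with $F'_\lambda(0)=0$ gives the dominant $(C_{F'_\lambda}(|u_1|+|\eta_0||u_2|)+C_{F'_\lambda}|u_2|)|v|\in L^1(D)$ on a bounded $\eta$-interval.

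For weak monotonicity, I would write
\begin{equation*}
\langle A_\lambda(t,\omega,u_1)-A_\lambda(t,\omega,u_2),u_1-u_2\rangle_V=\|\nabla(u_1-u_2)\|_H^2+\int_D(\beta_\lambda(u_1)-\beta_\lambda(u_2))(u_1-u_2)-2c\|u_1-u_2\|_H^2,
\end{equation*}
and the middle term is nonnegative by monotonicity of $\beta_\lambda$, yielding the bound with $c=2c$. For weak coercivity, using $\beta_\lambda(0)=0$ so $\beta_\lambda(u)u\geq 0$ pointwise, and Young's inequality on the $g$-term,
\begin{equation*}
\langle A_\lambda(t,\omega,u),u\rangle_V\geq\|\nabla u\|_H^2-2c\|u\|_H^2-\tfrac{1}{2}\|g(t,\omega)\|_H^2-\tfrac{1}{2}\|u\|_H^2.
\end{equation*}
Rewriting $\|\nabla u\|_H^2=\|u\|_V^2-\|u\|_H^2$ produces the required coercivity with $c_1=1$, $c_1'=2c+\tfrac{3}{2}$, and $f_1=\tfrac12\|g\|_H^2\in L^1([0,T]\times\Omega)$ by assumption $(A3)$.

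Finally, for boundedness I would estimate, for $v\in V$,
\begin{equation*}
|\langle A_\lambda(t,\omega,u),v\rangle_V|\leq\|\nabla u\|_H\|\nabla v\|_H+\|F'_\lambda(u)\|_H\|v\|_H+\|g(t,\omega)\|_H\|v\|_H,
\end{equation*}
and use $|F'_\lambda(u)|\leq C_{F'_\lambda}|u|$ (from Lipschitzianity and $F'_\lambda(0)=0$) to get $\|A_\lambda(t,\omega,u)\|_{V^{*}}\leq(1+C_{F'_\lambda})\|u\|_V+\|g(t,\omega)\|_H$, with $f_2:=\|g\|_H\in L^2([0,T]\times\Omega)$. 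None of the steps is a genuine obstacle; the only point requiring care is keeping track of the fact that all constants may depend on $\lambda$ through $C_{F'_\lambda}\sim\lambda^{-1}$, which is harmless here since $\lambda$ is fixed, but will matter later when passing to the limit---so I would be explicit that the lemma is stated at fixed $\lambda$.
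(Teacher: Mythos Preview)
Your proof is correct and follows essentially the same route as the paper: verify progressive measurability via $g$, hemicontinuity via Lipschitz-continuity of $F'_\lambda$, and then the monotonicity, coercivity, and boundedness estimates directly from the definition of $A_\lambda$. The only noteworthy difference is that for weak monotonicity and coercivity you split $F'_\lambda=\beta_\lambda-2c\,\mathrm{Id}$ and use the monotonicity of $\beta_\lambda$ (so that $\int_D(\beta_\lambda(u_1)-\beta_\lambda(u_2))(u_1-u_2)\ge 0$ and $\int_D\beta_\lambda(u)u\ge 0$), obtaining constants $2c$ and $2c+\tfrac32$ that are independent of $\lambda$, whereas the paper simply invokes the Lipschitz bound on $F'_\lambda$ and gets constants $C_{F'_\lambda}$ and $C_{F'_\lambda}+\tfrac32$ that blow up as $\lambda\to 0^+$; your observation is sharper but, as you correctly note, immaterial for the lemma since $\lambda$ is fixed here.
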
 
\begin{proof}
First of all, $A_{\lambda}$ is progressively measurable since so is $g$. Moreover, for every $(t, \omega) \in [0, T]\times \Omega$ and $u_1$, $u_1$, $v \in V$, we have 
\begin{equation*}
 \langle A_{\lambda}(t, \omega, u_1+\eta u_2), v \rangle_V = \int_D \nabla u_1 \cdot \nabla v + \eta  \int_D \nabla u_2 \cdot \nabla v + \int_D F'_{\lambda}(u_1+\eta u_2) v - \int_D g(t, \omega) v
\end{equation*}
which is continuous in $\eta \in \mathbb{R}$ by the Lipschitz-continuity of $F'_{\lambda}$. Secondly, for every $(t, \omega) \in [0, T]\times \Omega$ and $u_1$, $u_1 \in V$, exploiting the Lipschitz-continuity of $F'_{\lambda}$, we have that 
\begin{align*}
& \langle A_{\lambda}(t, \omega, u_1)-A_{\lambda}(t, \omega, u_2), u_1-u_2 \rangle_V \\
& = \int_D |\nabla(u_1-u_2)|^2 +\int_D (F'_{\lambda}(u_1)-F'_{\lambda}(u_2))(u_1-u_2) \\
& \geq - C_{F'_{\lambda}} \|u_1-u_2\|_H^2 \
\end{align*}
from which the weak monotonicity follows with the choice $c = C_{F'_{\lambda}}$. Similarly, for every $(t, \omega) \in [0, T]\times \Omega$ and $u \in V$, by Hölder's inequality and the Lipschitz-continuity of $F'_{\lambda}$ we have that 
\begin{align*}
& \langle A_{\lambda}(t, \omega, u), u\rangle_V \\
& = \int_D |\nabla u|^2 + \int_D F'_{\lambda}(u)u + \int_D g(t, \omega) u \\
& \geq \|\nabla u\|_H^2 - C_{F'_{\lambda}}\|u\|_H^2 - \|g(t, \omega)\|_H \|u\|_H \\
& \geq \|\nabla u\|_H^2  - \left(C_{F'_{\lambda}}+\frac{1}{2}\right)\|u\|^2_H - \frac{1}{2}\|g(t, \omega)\|_H^2 \\
& = \|u\|_V^2 - \left(C_{F'_{\lambda}}+\frac{3}{2}\right)\|u\|^2_H - \frac{1}{2}\|g(t, \omega)\|_H^2 
\end{align*}
so that $c_1 = 1 $, $c'_1 = C_{F'_{\lambda}}+3/2$, and $f_1(t, \omega) = 1/2 \|g(t, \omega)\|_H^2 \in L^1((0, T) \times \Omega)$ is a possible choice. Finally, for every $(t, \omega) \in [0, T]\times \Omega$ and $u$, $v \in V$, by Hölder's inequality and the Lipschitz-continuity of $F'_{\lambda}$, we have that
\begin{align*}
& \langle A_{\lambda}(t, \omega, u), v\rangle_V \\
& = \int_D \nabla u \cdot \nabla v + \int_D F'_{\lambda}(u) v + \int_D g(t, \omega)v \\
& \leq (\|\nabla u\|_H + C_{F'_{\lambda}}\|u\|_H + \|g(t, \omega)\|_H) \|v\|_V
\end{align*}
which implies that
\begin{equation*}
\|A_{\lambda}(t, \omega, u)\|_{V^{*}} \leq (1+C_{F'_{\lambda}})\|u\|_V + \|g(t, \omega)\|_H \\
\end{equation*}
which proves that $A_\lambda$ is also bounded with the choice of coefficient $c_2 = 1+C_{F'_{\lambda}}$ and $f_2(t, \omega) = \|g(t, \omega)\|_H \in L^2((0, T) \times \Omega)$.
\end{proof}
Let us observe that the operator $B_{\lambda}: H \to \mathcal{L}^2(U, H)$ defined in \eqref{BetaLambda} is Lipschitz-continuous. Indeed, exploiting the Lipschitz-continuity of $B$ an that $J_{\lambda}$ is non-expansive, for every $x$, $y \in H$, we have  
\begin{align*}
\|B_{\lambda}(x)-B_{\lambda}(y)\|_{\mathcal{L}^2(U, H)} = \|B(J_{\lambda}(x))- B(J_{\lambda}(y))\|_{\mathcal{L}^2(U, H)} \leq C_B \|J_{\lambda}(x)- J_{\lambda}(y)\|_H \leq C_B \|x-y\|_H
\end{align*}
Therefore Lemma \ref{CoeffHp} and the classical variational theory by Pardoux, Krylov and Rozovskii (cf. \cite{Pardoux, Krylov, Rockner}) ensure that there exists a unique process 
\begin{equation*}
u_{\lambda} \in L^2(\Omega, C^{0}([0, T]; H)) \cap L^2(\Omega, L^2(0, T; V))
\end{equation*}
such that, for any $v \in V$, it holds
\begin{equation}\label{approxEquation}
\begin{split}
\int_D u_{\lambda}(t)v &  + \int_0^t \int_D  \nabla u_{\lambda}(s)\nabla v ds + \int_0^t \int_D F'_{\lambda}(u_{\lambda}(s)) vds \\
& = \int_D u_0 v + \int_0^t \int_D g(s) v ds + \int_D \left(\int_0^t B_{\lambda}(u_{\lambda}(s)) dW(s)\right) v 
\end{split}
\end{equation}
for every $t \in [0, T]$, $\mathbb{P}$-a.s.

\section{Uniform estimates}\label{uniformEstimates}
In this section we derive uniform estimates with respect to $\lambda$. More precisely, in the first and in the second part we prove uniform estimates of $u_{\lambda}$ and $F'_{\lambda}(u_{\lambda})$, while in the last part we show that the sequence $(u_{\lambda})_{\lambda \in (0,1)}$ is a Cauchy sequence.

\subsection{First estimate}
By \cite[Theorem 4.2.5]{Rockner}, Itô's formula for the square of the $H$-norm of $u_{\lambda}$, reads for every $t \in [0, T]$, $\mathbb{P}$-a.s.\
\begin{equation} \label{ulambda1}
\begin{alignedat}{1}
& \frac{1}{2}\|u_{\lambda}(t)\|_H^{2} +\int_0^t \|\nabla u_{\lambda}(s)\|_H^2 ds +\int_0^t (F'_{\lambda}(u_{\lambda}(s)), u_{\lambda}(s))_H ds \\
& = \frac{1}{2} \|u_0\|_H^2 + \int_0^t (u_{\lambda}(s), g(s))_H ds +\frac{1}{2}\int_0^t \|B_{\lambda}(u_{\lambda}(s))\|_{\mathcal{L}^2(U, H)}^2 ds + \int_0^t (u_{\lambda}(s), B_{\lambda}(u_{\lambda}(s)))_H dW(s)
\end{alignedat}
\end{equation}
Since $F'_{\lambda}(u_{\lambda}(s)) = \beta_{\lambda}(u_{\lambda}(s))-2 c u_{\lambda}(s)$ and $\beta_{\lambda}$ is a monotone increasing function such that $\beta_{\lambda}(0) = 0$, then we have 
\begin{equation*}
\int_0^t (F'_{\lambda}(u_{\lambda}(s)), u_{\lambda}(s))_H ds = \int_0^t (\beta_{\lambda}(u_{\lambda}(s)),u_{\lambda}(s))_H ds - 2c \int_0^t \|u_{\lambda}(s)\|_H^2 ds \geq -2c \int_0^t \|u_{\lambda}(s)\|_H^2 ds
\end{equation*} 
Using Hölder's inequality and Young's inequality for the integral containing the external force $g$, we obtain 
\begin{equation*}
\int_0^t (u_{\lambda}(s), g(s))_H ds \leq \frac{1}{2}\int_0^t \|g(s)\|_H^2 ds+\frac{1}{2}\int_0^t \|u_{\lambda}(s)\|_H^2 ds
\end{equation*}
Moreover we have that
\begin{equation*}
\|B_{\lambda}(u_{\lambda}(s))\|_{\mathcal{L}^2(U, H)}^2 = \sum_{k \in \mathbb{N}} \|h_k(J_{\lambda}(u_{\lambda}(s)))\|_H^2 \leq \sum_{k \in \mathbb{N}} \|h_{k}\|_{W^{1, \infty}(-1, 1)}^2 |D| = C_B |D| 
\end{equation*}
and therefore
\begin{equation*}
\frac{1}{2}\int_0^t \|B_{\lambda}(u_{\lambda}(s))\|_{\mathcal{L}^2(U, H)}^2 ds  \leq  \frac{C_B}{2} |D| t
\end{equation*}
Hence, taking the supremum in time and expectations in \eqref{ulambda1}, for every $t \in [0, T]$, we have
\begin{equation*}
\begin{split}
& \frac{1}{2} \E \sup_{r \in [0, t]}\|u_{\lambda}(r)\|_H^{2} +\E \int_0^t \|\nabla u_{\lambda}(s)\|_H^2 ds \\
& \leq \frac{C_B}{2} |D| t +\frac{1}{2} \E\|u_0\|_H^2+ \frac{1}{2} \E \int_0^t  \|g(s)\|_H^2 ds  + \left(2c+\frac{1}{2}\right) \E \int_0^t \|u_{\lambda}(s)\|_H^2 ds \\
& + \E  \sup_{r \in [0, t]} \int_0^r (u_{\lambda}(s), B_{\lambda}(u_{\lambda}(s)))_H dW(s)
\end{split}
\end{equation*}
Finally, using Burkholder-Davis-Gundy inequality and Young's inequality in the stochastic integral in the last inequality, we have
\begin{align*}
& \E \sup_{r \in [0, t]} \int_0^t (u_{\lambda}(s), B_{\lambda}(u_{\lambda}(s)))_H dW(s) \\
&  \lesssim  \E \left(\int_0^t \|u_{\lambda}(s)\|_H^2 \|B_{\lambda}(u_{\lambda}(s))\|_{\mathcal{L}^2(U, H)}^2 ds\right)^{1/2} \\
 & \leq \E \left(\sup_{r \in [0, t]} \|u_{\lambda}(t)\|_H^2 \int_0^t \|B_{\lambda}(u_{\lambda}(s))\|_{\mathcal{L}^2(U, H)}^2 ds\right)^{1/2} \\
& \leq \frac{1}{4} \E\sup_{r \in [0, t]} \|u_{\lambda}(t)\|_H^2 + C_B |D|t
\end{align*}
Combining everything together leads, for every $t \in [0, T]$, to the following inequality
\begin{align*}
& \frac{1}{4} \E \sup_{r \in [0, t]}\|u_{\lambda}(r)\|_H^{2} +\E \int_0^t \|\nabla u_{\lambda}(s)\|_H^2 ds \\
& \lesssim \frac{3}{2} C_B |D|t + \frac{1}{2} \E\|u_0\|_H^2 + \frac{1}{2} \E \int_0^t  \|g(s)\|_H^2 ds  + \left(2c+\frac{1}{2}\right) \E \int_0^t \|u_{\lambda}(s)\|_H^2 ds \\
& \leq \frac{3}{2} C_B |D|t + \frac{1}{2} \E\|u_0\|_H^2 + \frac{1}{2} \E \int_0^t  \|g(s)\|_H^2 ds  + \left(2c+\frac{1}{2}\right) \E \int_0^t \sup_{r \in [0, s]}\|u_{\lambda}(r)\|_H^2 ds
\end{align*}
Applying Gronwall's lemma, we can conclude that there exists a constant $a$, independent of $\lambda$, such that
\begin{equation}\label{firstEstimate}
\|u_{\lambda}\|_{L^2(\Omega; C^{0}([0, T]; H))} + \|u_{\lambda}\|_{L^2(\Omega; L^2(0, T; V ))} \leq a
\end{equation}

\subsection{Second estimate}
Now we want to write Itô's formula for $\int_D F_{\lambda}(u_{\lambda})$. At this purpose we refer to the classical version of Itô's formula in \cite[Theorem 4.2]{Pardoux} and to appendix [\ref{generalizedIto}], obtaining that for every $t \in [0, T]$, $\mathbb{P}$-a.s.\ 
\begin{equation}\label{FLambda}
\begin{alignedat}{1}
& \int_D F_{\lambda}(u_{\lambda}(s)) + \int_0^t (F''_{\lambda}(u_{\lambda}(s)),|\nabla u_{\lambda}(s)|^2)_H ds + \int_0^t \|F'_{\lambda}(u_{\lambda}(s))\|_H^2 ds \\ 
& = \int_D F_{\lambda}(u_0) + \int_0^t (F'_{\lambda}(u_{\lambda}(s)), g(s))_H ds + \frac{1}{2} \int_0^t \sum_{k \in \mathbb{N}} (F''_{\lambda}(u_{\lambda}(s)), |B_{\lambda}(u_{\lambda}(s))u_k|^2)_H ds \\
& + \int_0^t (F'_{\lambda}(u_{\lambda}(s)), B_{\lambda}(u_{\lambda}(s)))_H dW(s)
\end{alignedat}
\end{equation}
Since $F''_{\lambda}(u_{\lambda}(s)) = \beta'_{\lambda}(u_{\lambda}(s)) - 2c$ and $\beta_{\lambda}$ is an increasing function, we have 
\begin{align*}
\int_0^t (F''_{\lambda}(u_{\lambda}(s)), |\nabla u_{\lambda}(s)|^2)_H ds & = \int_0^t (\beta'_{\lambda}(u_{\lambda}(s)), |\nabla u_{\lambda}(s)|^2)_H ds - 2c \int_0^t \|\nabla u_{\lambda}(s)\|_H^2 ds\\
&  \geq  - 2c \int_0^t  \|\nabla u_{\lambda}(s)\|_H^2 ds
\end{align*}
Using Hölder's inequality and Young's inequality for the integral containing the external force $g$, we obtain 
\begin{equation*}
\int_0^t (F'_{\lambda}(u_{\lambda}(s)), g(s))_H ds \leq \frac{1}{2}\int_0^t \|g(s)\|_H^2 ds+\frac{1}{2}\int_0^t \|F'_{\lambda}(u_{\lambda}(s))\|_H^2 ds
\end{equation*}
Moreover, since $\beta_{\lambda} (r) = \beta(J_{\lambda}(r))$ for any $r \in \mathbb{R}$, then it holds that $\beta'_{\lambda}(r) = \beta'(J_{\lambda}(r)) J'_{\lambda}(r)$, where the derivative of the function $\beta$ is given by 
\begin{equation*}
\beta'(r) = \frac{2}{1-r^2}, \quad r \in (-1,1)
\end{equation*}
Now, using the assumptions in $(A2)$ on $(h_k)_{k \in \mathbb{N}}$ and the fact that the resolvent $J_{\lambda}$ is non-expansive, we have 
\begin{align*}
& \frac{1}{2} \int_0^t \sum_{k \in \mathbb{N}} (F''_{\lambda}(u_{\lambda}(s)), |B_{\lambda}(u_{\lambda}(s))u_k|^2)_H ds \\
& \leq \frac{1}{2} \int_0^t \sum_{k \in \mathbb{N}} (\beta'_{\lambda}(u_{\lambda}(s)), |B_{\lambda}(u_{\lambda}(s))u_k|^2)_H ds \\
& = \frac{1}{2} \int_0^t \sum_{k \in \mathbb{N}} \int_D \beta'(J_{\lambda}(u_{\lambda}(s))) J'_{\lambda}(u_{\lambda}(s))|h_k(J_{\lambda}(u_{\lambda}(s)))|^2 ds\\
& \leq \int_0^t \sum_{k \in \mathbb{N}} \int_D \frac{1}{1-(J_{\lambda}(u_{\lambda}(s)))^2} |h_k(J_{\lambda} (u_{\lambda}(s)))-h_k(1)\|h_k(J_{\lambda}(u_{\lambda}(s)))-h_k(-1)| ds \\
& \leq C_B \int_0^t \int_D \frac{|J_{\lambda}(u_{\lambda}(s))-1\|J_{\lambda} (u_{\lambda}(s))+1|}{|J_{\lambda} (u_{\lambda}(s))-1\|J_{\lambda} (u_{\lambda}(s))+1|} ds  = C_B |D| t
\end{align*}
Therefore, since $F_{\lambda}(u_{\lambda}(t)) \geq 0$ and $F_{\lambda}(u_0) \leq F(u_0)$, taking the supremum in time and expectations in \eqref{FLambda}, it holds
\begin{align*}
\frac{1}{2} \E \int_0^T \|F'_{\lambda}(u_{\lambda}(s))\|_H^2 ds & \leq C_B |D| T + \E \int_D F(u_0) + \frac{1}{2} \E \int_0^T \|g(s)\|_H^2 ds + 2c \E \int_0^T \|\nabla u_{\lambda}(s)\|_H^2 ds \\
& +\E \sup_{t \in [0, T]} \int_0^t (F'_{\lambda}(u_{\lambda}(s)), B_{\lambda}(u_{\lambda}(s))) dW(s)
\end{align*}
Concerning the stochastic integral in the last inequality, proceeding in a similar way as in the estimate for $u_{\lambda}$, i.e.\ using the Burkholder-Davis-Gundy inequality and the Young's inequality, we obtain
\begin{align*}
& \E \sup_{t \in [0, T]} \int_0^t (F'_{\lambda}(u_{\lambda}(s)), B_{\lambda}(u_{\lambda}(s))) dW(s) \\
& \lesssim  \E \left(\int_0^T \|F'_{\lambda}(u_{\lambda}(s))\|_H^2 \|B_{\lambda}(u_{\lambda}(s))\|_{\mathcal{L}^2(U, H)}^2 ds\right)^{1/2} \\
& \leq  \E \left(\int_0^T \|F'_{\lambda}(u_{\lambda}(s))\|_H^2 ds\right)^{1/2} \left(\sup_{t \in [0, T]} \|B_{\lambda}(u_{\lambda}(t))\|_{\mathcal{L}^2(U; H)}^2\right)^{1/2} \\
& \leq \frac{1}{4} \E \int_0^T \|F'_{\lambda}(u_{\lambda}(s))\|_H^2 ds + \E\sup_{t \in [0, T]} \|B_{\lambda}(u_{\lambda}(t))\|_{\mathcal{L}^2(U; H)}^2 \\
& \leq  \frac{1}{4} \E \int_0^T \|F'_{\lambda}(u_{\lambda}(s))\|_H^2 ds +  C_B |D|
\end{align*}
Therefore, thanks to \eqref{firstEstimate}, we can conclude that there exists a constant $a$, independent of $\lambda$, such that
\begin{equation}\label{secondEstimate}
\|F'_{\lambda}(u_{\lambda})\|_{L^2(\Omega; L^2(0, T; H))} \leq a
\end{equation}
In particular, since $\beta_{\lambda}(u_{\lambda}) = F'_{\lambda}(u_{\lambda}) +2cu_{\lambda}$, from \eqref{firstEstimate} and \eqref{secondEstimate}, it follows that there exits a constant $a$ independent of $\lambda$ such that
\begin{equation}\label{estimateBetaLambda}
\|\beta_{\lambda}(u_{\lambda})\|_{L^2(\Omega; L^2(0, T, H))} \leq a
\end{equation}
\subsection{Third estimate}
Fix $\lambda$, $\epsilon \in (0,1)$ with $\lambda > \epsilon$ and consider equation \eqref{approxEquation} evaluated for $\lambda$ and $\epsilon$. Then, taking the difference, we obtain that, for any $v \in V$, it holds for every $t \in [0, T]$, $\mathbb{P}$-a.s.
\begin{align*}
\int_D (u_{\lambda}(t) - u_{\epsilon}(t))v &+ \int_0^t \int_D \nabla (u_{\lambda}(s) - u_{\epsilon}(s)) \nabla v ds + \int_0^t \int_D (F'_{\lambda}(u_{\lambda}(s))- F'_{\epsilon}(u_{\epsilon}(s))) v ds \\
& =\int_D \left(\int_0^t (B_{\lambda}(u_{\lambda}(s))-B_{\epsilon}(u_{\epsilon}(s))) dW(s)\right) v
\end{align*}
Therefore, applying Itô's formula for the square of the $H$-norm of $u_{\lambda}-u_{\epsilon}$, it holds for every $t \in [0, T]$, $\mathbb{P}$-a.s. 
\begin{equation}\label{estimateF'}
\begin{alignedat}{1}
& \frac{1}{2}\|u_{\lambda}(t)-u_{\epsilon}(t)\|_H^2 + \int_0^t \|\nabla (u_{\lambda}(s)-u_{\epsilon}(s))\|_H^2 ds + \int_0^t  (F'_{\lambda}(u_{\lambda}(s))-F'_{\epsilon}(u_{\epsilon}(s)),u_{\lambda}(s)-u_{\epsilon}(s))_H ds \\
& = \frac{1}{2}\int_0^t \|B_{\lambda}(u_{\lambda}(s))-B_{\epsilon}(u_{\epsilon}(s))\|_{\mathcal{L}^2(U, H)}^2 ds + \int_0^t (u_{\lambda}(s)-u_{\epsilon}(s), B_{\lambda}(u_{\lambda}(s))-B_{\epsilon}(u_{\epsilon}(s)))_H dW(s)
\end{alignedat}
\end{equation}
Now, since 
\begin{equation*}
F'_{\lambda}(u_{\lambda}(s))-F'_{\epsilon}(u_{\epsilon}(s)) = \beta_{\lambda}(u_{\lambda}(s))-\beta_{\epsilon}(u_{\epsilon}(s))-2c(u_{\lambda}(s)-u_{\epsilon}(s))
\end{equation*}
using the monotonicity of $\beta$, Hölder's inequality, and Young's inequality, we obtain
\begin{align*}
& (\beta_{\lambda}(u_{\lambda}(s))-\beta_{\epsilon}(u_{\epsilon}(s)), u_{\lambda}(s)-u_{\epsilon}(s))_H \\ 
& = (\beta_{\lambda}(u_{\lambda}(s))-\beta_{\epsilon}(u_{\epsilon}(s)), \lambda \beta_{\lambda}(u_{\lambda}(s)) -\epsilon \beta_{\epsilon}(u_{\epsilon}(s)))_H  \\
& \qquad + (\beta_{\lambda}(u_{\lambda}(s))-\beta_{\epsilon}(u_{\epsilon}(s)), J_{\lambda}(u_{\lambda}(s))-J_{\epsilon}(u_{\epsilon}(s)))_H \\
& \geq (\beta_{\lambda}(u_{\lambda}(s))-\beta_{\epsilon}(u_{\epsilon}(s)), \lambda \beta_{\lambda}(u_{\lambda}(s)) -\epsilon \beta_{\epsilon}(u_{\epsilon}(s)))_H \\
& = \lambda \|\beta_{\lambda}(u_{\lambda}(s)\|_H^2 + \epsilon \|\beta_{\epsilon}(u_{\epsilon}(s)\|_H^2-(\lambda+\epsilon)(\beta_{\lambda}(u_{\lambda}(s)), \beta_{\epsilon}(u_{\epsilon}(s)))_H \\
& \geq -(\lambda+\epsilon)\|\beta_{\lambda}(u_{\lambda}(s))\|_H \|\beta_{\epsilon}(u_{\epsilon}(s))\|_H \\
& \geq -\frac{\lambda+\epsilon}{2} \left(\|\beta_{\lambda}(u_{\lambda}(s))\|_H^2 +\|\beta_{\epsilon}(u_{\epsilon}(s))\|_H^2\right) 
\end{align*}
Using the assumptions in $(A2)$ on $(h_k)_{k \in \mathbb{N}}$, we obtain
\begin{align*}
\frac{1}{2} \E \int_0^t \|B_{\lambda}(u_{\lambda}(s))-B_{\epsilon}(u_{\epsilon}(s))\|_{\mathcal{L}^2(U, H)}^2 & = \frac{1}{2} \E \int_0^t \sum_{k \in \mathbb{N}} \int_D |h_k(J_{\lambda} (u_{\lambda}(s)))-h_k(J_{\epsilon}(u_{\epsilon}(s)))|^2 ds \\
& \leq \frac{C_B}{2} \E \int_0^t \|J_{\lambda}(u_{\lambda}(s))-J_{\epsilon} (u_{\epsilon}(s))\|_H^2 ds
\end{align*}
Hence, taking the supremum in time and expectations in \eqref{estimateF'}, for every $t \in [0, T]$, we have
\begin{align*}
& \frac{1}{2} \E \sup_{r \in [0, t]} \|u_{\lambda}(r)-u_{\epsilon}(r)\|_H^2+ \E \int_0^t \|\nabla(u_{\lambda}(s)-u_{\epsilon}(s))\|_H^2 ds \\
& \leq \frac{\lambda+\epsilon}{2} \E \int_0^t \left(\|\beta_{\lambda}(u_{\lambda}(s))\|_H^2 +\|\beta_{\epsilon}(u_{\epsilon}(s))\|_H^2\right) ds + 2c \E \int_0^t \|u_{\lambda}(s)-u_{\epsilon}(s)\|_H^2 ds \\
& + \frac{C_B}{2} \E \int_0^t \|J_{\lambda}(u_{\lambda}(s))-J_{\epsilon} (u_{\epsilon}(s))\|_H^2 ds + \E \sup_{r \in [0, t]} \int_0^r (u_{\lambda}(s)-u_{\epsilon}(s), B_{\lambda}(u_{\lambda}(s))-B_{\epsilon}(u_{\epsilon}(s))) dW(s)
\end{align*}
Thanks to \eqref{estimateBetaLambda} it exists a constant $a_1$ independent of $\lambda$ and $\epsilon$, such that 
\begin{equation*}
\E \int_0^t \left(\|\beta_{\lambda}(u_{\lambda}(s))\|_H^2 +\|\beta_{\epsilon}(u_{\epsilon}(s))\|_H^2\right) ds  \leq 2 a_1
\end{equation*}
Using Burkholder-Davis-Gundy inequality and the Young's inequality we have 
\begin{align*}
& \E \sup_{r \in [0, t]} \int_0^r (u_{\lambda}(s)-u_{\epsilon}(s), B_{\lambda}(u_{\lambda}(s))-B_{\epsilon}(u_{\epsilon}(s))) dW(s) \\
& \lesssim \E \left(\int_0^t \|u_{\lambda}(s)-u_{\epsilon}(s)\|_H^2 \|B_{\lambda}(u_{\lambda}(s))-B_{\epsilon}(u_{\epsilon}(s))\|_{\mathcal{L}^2(U, H)}^2 ds\right)^{1/2} \\
& \leq \frac{1}{4} \E \sup_{r \in [0, t]} \int_0^r \|u_{\lambda}(s)-u_{\epsilon}(s)\|_H^2 ds + C_B \E \int_0^t \|J_{\lambda} (u_{\lambda}(s))-J_{\epsilon}(u_{\epsilon}(s))\|_H^2 ds
\end{align*}
Since for every $r \in \mathbb{R}$ the map $\lambda \mapsto |\beta_{\lambda}(r)|$ is non-decreasing for $\lambda \searrow 0$ and since we chose $\lambda > \epsilon$, we have 
\begin{align*}
\|J_{\lambda}(u_{\lambda}(s))-J_{\epsilon}(u_{\epsilon}(s))\|_H^2 & \leq 2 \|J_{\lambda}(u_{\lambda}(s))-J_{\lambda}(u_{\epsilon}(s))\|_H^2+2\|J_{\lambda}(u_{\epsilon}(s))-J_{\epsilon}(u_{\epsilon}(s))\|_H^2 \\
& \leq 2 \|u_{\lambda}(s)-u_{\epsilon}(s)\|_H^2 + 2 \|J_{\lambda}(u_{\epsilon}(s))-u_{\epsilon}(s) - (J_{\epsilon}(u_{\epsilon}(s))-u_{\epsilon}(s))\|_H^2 \\
& \leq 2 \|u_{\lambda}(s)-u_{\epsilon}(s)\|_H^2 + 4(\lambda^2 \|\beta_{\lambda}(u_{\epsilon}(s))\|_H^2+\epsilon^2 \|\beta_{\epsilon}(u_{\epsilon}(s))\|_H^2) \\
& \leq 2 \|u_{\lambda}(s)-u_{\epsilon}(s)\|_H^2 + 4(\lambda^2+\epsilon^2) \|\beta_{\epsilon}(u_{\epsilon}(s))\|_H^2\\
\end{align*}
Hence, thanks to \eqref{estimateBetaLambda}, it exists a constant $a_2$ independent of $\lambda$ and $\epsilon$, such that 
\begin{equation*}
\E \int_0^t \|J_{\lambda}(u_{\lambda}(s))-J_{\epsilon}(u_{\epsilon}(s))\|_H^2 ds \leq 2 \E \int_0^t \|u_{\lambda}(s)-u_{\epsilon}(s)\|_H^2 + 2 a_2 (\lambda^2+\epsilon^2)
\end{equation*}
Combining everything together, we obtain that for every $t \in [0, T]$, it holds
\begin{align*}
& \frac{1}{4} \E \sup_{r \in [0, t]} \|u_{\lambda}(r)-u_{\epsilon}(r)\|_H^2+ \E \int_0^t \|\nabla(u_{\lambda}(s)-u_{\epsilon}(s))\|_H^2 ds  \\
& \lesssim [a_1(\lambda+\epsilon) +3 C_B a_2 (\lambda^2+\epsilon^2)] +(2c + 3 C_B) \E \int_0^t \|u_{\lambda}(s)-u_{\epsilon}(s)\|_H^2 ds \\
& \leq [a_1(\lambda+\epsilon) +3 C_B a_2 (\lambda^2+\epsilon^2)]  +(2c + 3 C_B) \E \int_0^t  \sup_{r \in [0, s]}\|u_{\lambda}(r)-u_{\epsilon}(r)\|_H^2 ds
\end{align*}
Finally, thanks to Gronwall's lemma, for every $t \in [0, T]$ it holds 
\begin{equation*}
\E \sup_{r \in [0, t]} \|u_{\lambda}(r)-u_{\epsilon}(r)\|_H^2+ \E \int_0^t \|\nabla(u_{\lambda}(s)-u_{\epsilon}(s))\|_H^2 ds  \lesssim [a_1(\lambda+\epsilon) +3 C_B a_2 (\lambda^2+\epsilon^2)]e^{(2c+3C_B) t}
\end{equation*}
and in particular we have
\begin{equation}\label{thirdEstimate}
\|u_{\lambda}-u_{\epsilon}\|_{L^2(\Omega; C^{0}([0, T]; H))} + \|u_{\lambda}-u_{\epsilon}\|_{L^2(\Omega; L^2(0, T; V ))} \to 0, \quad \text{ as } \lambda, \epsilon \to 0^{+}
\end{equation}

\section{Proofs of main results}\label{proofMainResult}
\subsection{Existence of a variational solution}
Now we are concerned with passing to the limit as $\lambda \to 0^{+}$ in equation \eqref{approxEquation}. We use the uniform estimate with respect to $\lambda$ in order to deduce the existence of a limit process $u$, which is a variational solution in the sense of the Definition \ref{varSol}, to problem \eqref{acSpde}.

\smallskip
From \eqref{thirdEstimate} it follows that $(u_{\lambda})_{\lambda \in (0,1)}$ is a Cauchy sequence in $L^2(\Omega; C^0([0, T]; H)) \cap L^2(\Omega; L^2(0, T; V))$ and therefore 
\begin{equation}\label{convergenceU}
u_{\lambda} \to u \quad \text{ in } L^2(\Omega; C^0([0, T]; H)) \cap L^2(\Omega; L^2(0, T; V))
\end{equation}
Moreover, since $-\Delta \in \mathcal{L}(V, V^{*})$, we have  
\begin{equation}\label{convergenceLap}
-\Delta u_{\lambda} \to -\Delta u \quad \text{ in } L^2(\Omega; L^2(0, T; V^{*}))
\end{equation}
We recall that if $E$ is a reflexive Banach space and $p \in (1, \infty) $, then $L^p(\Omega; E)$ is reflexive and its dual is $L^{p/(p-1)}(\Omega, E^{*})$. Hence $L^2(\Omega; L^2(0, T; H))$ is a reflexive Banach space and so, thanks to \eqref{secondEstimate}, there exists a subsequence $\lambda_n \to 0^{+}$, as $n \to \infty$, such that
\begin{equation}\label{convergenceF'}
F'_{\lambda_n}(u_{\lambda_n}) \rightharpoonup \xi \quad \text{ in } L^2(\Omega; L^2(0, T; H))
\end{equation} 
Since $F'_{\lambda}(u_{\lambda}) = \beta_{\lambda}(u_{\lambda})-2cu_{\lambda}$, from \eqref{convergenceU} and \eqref{convergenceF'}, as $n \to \infty$, we obtain  
\begin{equation*}
\beta_{\lambda_n}(u_{\lambda_n}) \rightharpoonup \xi + 2 cu \quad \text{ in } L^2(\Omega; L^2(0, T; H))
\end{equation*}
Moreover, thanks to \eqref{estimateBetaLambda}, we have 
\begin{align*}
\|J_{\lambda}(u_{\lambda}) - u\|_{L^2{(\Omega; L^2(0, T; H))}} & \leq \|J_{\lambda}(u_{\lambda}) - u_{\lambda}\|_{L^2(\Omega; L^2(0, T; H)} + \|u-u_{\lambda}\|_{L^2(\Omega; L^2(0, T; H))} \\
& \leq \lambda \|\beta_{\lambda}(u_{\lambda})\|_{L^2(\Omega; L^2(0, T; H))} + \|u_{\lambda}-u\|_{L^2(\Omega; L^2(0, T; H))} \\
& \leq a \lambda + \|u_{\lambda}-u\|_{L^2(\Omega; L^2(0, T; H))} \to 0 \quad \text{ as } \lambda \to 0^{+} 
\end{align*}
Since $\beta_{\lambda}(r) = \beta(J_{\lambda}(r))$ for any $r \in \mathbb{R}$, we have the following convergences as $n \to \infty$
\begin{alignat*}{2}
& J_{\lambda_n} u_{\lambda_n} \to  u && \quad \text{ in } L^2(\Omega; L^2(0, T; H)) \\
& \beta(J_{\lambda_n} u_{\lambda_n}) \rightharpoonup \xi + 2 c u && \quad \text{ in } L^2(\Omega; L^2(0, T; H))
\end{alignat*}
Since $\beta \subset \mathbb{R} \times \mathbb{R}$ is a maximal monotone graph, then it is strongly-weakly closed \cite[Proposition 2.1]{Barbu}, and so we have
\begin{equation*}
\xi = \beta(u)-2cu = F'(u) \quad \text{ a.e.\ in } [0, T] \times \Omega
\end{equation*}
Therefore, as $n \to + \infty$, we have 
\begin{equation}\label{convergenceF'real}
F'_{\lambda_n}(u_{\lambda_n}) \rightharpoonup F'(u) \quad \text{ in } L^2(\Omega; L^2(0, T; H))
\end{equation}
We are now concerned with the passage to the limit in the stochastic integral. We recall that the function $F'$ is only defined in the interval $(-1, 1)$ and since $F'(u) \in L^2(\Omega; L^2(0, T; H))$, then we have
\begin{equation*}
||u||_{L^{\infty}(D)} < 1 \quad \text{ a.e.\ in } [0, T] \times \Omega
\end{equation*}
Therefore, since the operator $B$ is only defined in $H_1$, we can evaluate $B$ at $u$, and using Burkholder-Davis-Gundy inequality, we obtain
\begin{align*}
\E \sup_{t \in [0, T]} \left\| \int_0^ t (B_{\lambda}(u_{\lambda}(s))  - B (u(s)))d W(s) \right\|_H^2 & \lesssim \E \int_0^T \|B(J_{\lambda}(u_{\lambda}(s))) - B(u(s))\|_{\mathcal{L}^2(U, H)}^2 ds \\
& \leq C_B \|J_{\lambda}(u_{\lambda}) - u\|^2_{L^2(\Omega; L^2(0, T; H))} \to 0 \quad \text{ as } \lambda \to 0^{+} 
\end{align*}
and therefore, as $\lambda \to 0^{+}$, it holds
\begin{equation}\label{convergenceStoch}
B_{\lambda}(u_{\lambda}) \cdot W \to B(u) \cdot W \quad \text{ in } L^2(\Omega; C^0([0, T]; H))
\end{equation}
We are now ready to pass to the limit in equation \eqref{approxEquation}. Thanks to \eqref{convergenceU}, \eqref{convergenceLap}, \eqref{convergenceF'real}, and \eqref{convergenceStoch}, for any $v \in V$ and for every $t \in [0, T]$, as $n \to \infty$, we have
\begin{align*}
& \E \,  \sup_{t \in [0, T]} \left| \int_D u_{\lambda_n}(t) v - \int_D u(t) v \right| \to 0 \\
& \E \int_0^t \int_D \nabla u_{\lambda_n}(s) \nabla v ds \to \E \int_0^t \int_D \nabla u(s) \nabla v ds \\
& \E \int_0^t \int_D F'_{\lambda_n}(u_{\lambda_n}(s)) v ds \to \E \int_0^t \int_D F'(u(s)) v ds\\
& \E \sup_{t \in [0, T]} \left|\int_D \left(\int_0^t B_{\lambda_n}(u_{\lambda_n}(s)) dW(s)\right) v -\int_D \left(\int_0^t B(u(s)) dW(s)\right) v\right| \to 0
\end{align*}
At this point, evaluating \eqref{approxEquation} in $\lambda_n$ and letting $n \to \infty$, we obtain
\begin{align*}
\int_D u(t) v  & +\int_0^t \int_D \nabla u(s) \nabla v ds +\int_0^t \int_D F'(u(s) v ds \\
& =  \int_D u_0 v + \int_0^t \int_D g(s) v ds +  \int_D \left(\int_0^t B(u(s)) dW(s)\right) v
\end{align*}
for every $t \in [0, T]$, $\mathbb{P}$-a.s. Therefore the limit process $u$ is a variational solution to problem \eqref{acSpde}, indeed $u \in L^2(\Omega; C^{0}([0, T]; H)) \cap L^2(\Omega; L^2(0, T; V))$, it is such that $F'(u) \in L^2(\Omega; L^2(0, T; H))$ and for all $v \in V$ it satisfies \eqref{varSol} for every $t \in [0, T]$, $\mathbb{P}$-a.s.

\subsection{Continuous dependence on the initial datum}
Assume that $F$, $B$, $(u_0^1, g_1)$, $(u_0^2, g_2)$ are as in assumptions $(A1)$-$(A3)$ and let $u_1$ and $u_2$ the variational solutions of the respective problems. Then, applying Itô's formula for the square of the $H$-norm of $u_1-u_2$, we obtain for every $t \in [0, T]$, $\mathbb{P}$-a.s.
\begin{equation}\label{itoContDep}
\begin{alignedat}{1}
& \frac{1}{2} \|u_1(t)-u_2(t)\|_H^2 +\int_0^t \|\nabla(u_1(s)-u_2(s))\|_H^2 ds + \int_0^t (F'(u_1(s))-F'(u_2(s)), u_1(s)-u_2(s))_H ds  \\
& = \frac{1}{2} \|u_0^1-u_0^2\|_H^2+\int_0^t (g_1(s)-g_2(s), u_1(s)-u_2(s))_H ds +\frac{1}{2}\int_0^t \|B(u_1(s))-B(u_2(s))\|_{\mathcal{L}^2(U, H)}^2 ds \\
& + \int_0^t (u_1(s)-u_2(s), B(u_1(s))-B(u_2(s)))_H dW(s)
\end{alignedat}
\end{equation}
Since $F'(r) = \beta(r)-2cr$ for any $r \in (-1,1)$, and $\beta$ is a maximal monotone graph, we have  
\begin{align*}
& \int_0^t (F'(u_1(s))-F'(u_2(s)), u_1(s)-u_2(s))_H ds \\
& =   \int_0^t (\beta(u_1(s))-\beta(u_2(s)), u_1(s)-u_2(s))_H ds-2c \int_0^t \|u_1(s)-u_2(s)\|_H^2 ds \\
& \geq - 2c \int_0^t \|u_1(s)-u_2(s)\|_H^2 ds 
\end{align*}
Using Young's inequality we obtain
\begin{equation*}
\int_0^t (g_1(s)-g_2(s), u_1(s)-u_2(s))_H ds \leq \frac{1}{2} \int_0^t \|g_1(s)-g_2(s)\|_{V^{*}}^2 ds + \frac{1}{2} \int_0^t \|u_1(s)-u_2(s)\|_V^2 ds
\end{equation*}
Using the Lipschitz-continuity of the operator $B$ we have   
\begin{equation*}
\frac{1}{2} \int_0^t \|B(u_1(s))-B(u_2(s))\|_{\mathcal{L}^2(U, H)}^2 ds \leq \frac{C_B}{2} \int_0^t \|u_1(s)-u_2(s)\|_H^2 ds
\end{equation*}
Hence, taking the supremum in time and expectations in \eqref{itoContDep}, for every $t \in [0, T]$, we have
\begin{align*}
&\frac{1}{2} \E \sup_{r \in [0, t]} \|u_1(r)-u_2(r)\|_H^2 + \frac{1}{2} \E \int_0^t \|\nabla(u_1(s)-u_2(s))\|_H^2 ds \\
& \leq \frac{1}{2} \E \|u_0^1-u_0^2\|_H^2 +\frac{1}{2} \E \int_0^t \|g_1(s)-g_2(s)\|_{V^{*}}^2 + \left(\frac{1}{2} + 2c + \frac{C_B}{2}\right) \E \int_0^t \|u_1(s)-u_2(s)\|_H^2 ds\\
& + \E \sup_{r \in [0, t]} \int_0^r (u_1(s)-u_2(s), B(u_1(s))-B(u_2(s)))_H dW(s)
\end{align*}
Now, using Burkholder-Davis-Gundy inequality and Young's inequality in the stochastic integral in the last inequality, we obtain
\begin{align*}
& \E \sup_{r \in [0, t]} \int_0^r (u_1(s)-u_2(s), B(u_1(s))-B(u_2(s)))_H dW(s) \\
& \lesssim\E\left(\int_0^t \|u_1(s)-u_2(s)\|_H^2 \|B(u_1(s))-B(u_2(s))\|_{\mathcal{L}^2(U,H)}^2 ds\right)^{\frac{1}{2}} \\
& \leq \frac{1}{4} \sup_{r \in [0, t]} \|u_{1}(r)-u_2(r)\|_H^2 + C_B \E \int_0^t \|u_1(s)-u_2(s)\|_H^2 ds
\end{align*}
and so, for every $t \in [0, T]$ we have
\begin{align*}
&\frac{1}{4} \E \sup_{r \in [0, t]} \|u_1(r)-u_2(r)\|_H^2 + \frac{1}{2}\E \int_0^t \|\nabla(u_1(s)-u_2(s))\|_H^2 ds \\
& \lesssim \frac{1}{2} \E \|u_0^1-u_0^2\|_H^2 +\frac{1}{2} \E \int_0^t \|g_1(s)-g_2(s)\|_{V^{*}}^2 + \left(\frac{1}{2} + 2c + \frac{3}{2} C_B\right) \E \int_0^t \|u_1(s)-u_2(s)\|_H^2 ds \\
& \leq \frac{1}{2} \E \|u_0^1-u_0^2\|_H^2 +\frac{1}{2} \E \int_0^t \|g_1(s)-g_2(s)\|_{V^{*}}^2 + \left(\frac{1}{2} + 2c + \frac{3}{2} C_B\right) \E \int_0^t \sup_{r \in [0, s]} \|u_1(r)-u_2(r)\|_H^2 ds 
\end{align*}
Therefore, applying Gronwall's lemma, we obtain the desired result \eqref{continuousDep}.

\subsection{Existence of an analytically strong solution}
Assuming that $u_0 \in L^2(\Omega; V)$, we now show an additional estimate on the approximated solution $u_{\lambda}$, from which it follows that the solution $u$ is also an analytically strong solution to problem \eqref{acSpde}, in the sense of Definition \ref{analyticalStrongSol}.

\smallskip
Consider the functional $\Phi$ defined as follows
\begin{equation*}
\Phi: V \to \mathbb{R}, \quad \Phi(x) := \frac{1}{2}\|\nabla x\|_H^2, \quad x \in V
\end{equation*}
Its first order Fréchet derivative is given by $D \Phi(x) = -\Delta x$ and, since $-\Delta \in \mathcal{L}(V, V^{*})$, its second order Fréchet derivative is given by $D^2 \Phi(x) = - \Delta$. For fixed $\lambda \in (0, 1)$, using a finite-dimensional approximation for the approximated problem \eqref{approximatedProblem}, thanks to the fact that $u_0 \in L^2(\Omega; V)$, it is possible to prove that $u_{\lambda} \in L^2(\Omega; L^2(0, T; V_0))$. However, for brevity we omit the details and proceed in a formal way. Therefore, thanks to \cite[Theorem 4.32]{DaPrato}, Itô's formula for $\Phi(u_{\lambda})$ reads for every $t \in [0, T]$, $\mathbb{P}$-a.s.
\begin{equation}\label{estimateGrad}
\begin{alignedat}{1}
& \frac{1}{2} \|\nabla u_{\lambda}(t)\|_H^2+\int_0^t \|\Delta u_{\lambda}(s)\|_H^2+\int_0^t  (F''_{\lambda}(u_{\lambda}(s)), |\nabla u_{\lambda}(s)|^2)_H ds \\
& = \frac{1}{2}\|\nabla u_0\|_H^2 - \int_0^t  (\Delta u_{\lambda}(s), g(s))_H ds + \frac{1}{2}\int_0^t \|\nabla B_{\lambda}(u_{\lambda}(s))\|_{\mathcal{L}^2(U, H)}^2 ds \\
& +\int_0^t (\nabla u_{\lambda}(s), \nabla B_{\lambda}(u_{\lambda}(s)))_H dW(s)
\end{alignedat}
\end{equation}
Since $F''_{\lambda}(u(s)) = \beta'_{\lambda}(u_{\lambda}(s))-2c$ and $\beta_{\lambda}$ is an increasing function, we have 
\begin{align*}
\int_0^t  (F''_{\lambda}(u_{\lambda}(s)), |\nabla u_{\lambda}(s)|^2)_H ds & = \int_0^t  (\beta'_{\lambda}(u_{\lambda}(s)), |\nabla u_{\lambda}(s)|^2)_H ds - 2c \int_0^t \|\nabla u_{\lambda}(s)\|_H^2 ds\\ 
&  \geq - 2c \int_0^t \|\nabla u_{\lambda}(s)\|_H^2 ds
\end{align*}
Using Hölder's inequality and Young's inequality for the integral containing the external force $g$, we obtain 
\begin{equation*}
-\int_0^t (\Delta(u_{\lambda}(s)), g(s))_H ds \leq \frac{1}{2}\int_0^t \|g(s)\|_H^2 ds+\frac{1}{2}\int_0^t \|\Delta(u_{\lambda}(s))\|_H^2 ds
\end{equation*}
Moreover, using the assumptions in $(A2)$ on $(h_k)_{k \in \mathbb{N}}$, we have 
\begin{align*}
\frac{1}{2}\int_0^t \|\nabla B_{\lambda}(u_{\lambda}(s))\|_{\mathcal{L}^2(U, H)}^2 ds & = \frac{1}{2}\int_0^t \sum_{k \in \mathbb{N}} \int_D |h'_k(u_{\lambda}(s)) \nabla u_{\lambda}(s)|^2 ds \\
& \leq \frac{C_B}{2}\int_D \|\nabla u_{\lambda}(s)\|_H^2 ds
\end{align*}
Hence, taking the supremum in time and the expectation in \eqref{estimateGrad}, for every $t \in [0, T]$ it holds
\begin{align*}
& \frac{1}{2} \E \sup_{r \in [0, t]} \|\nabla u_{\lambda}(t)\|_H^2+\frac{1}{2}\E \int_0^t \|\Delta u_{\lambda}(s)\|_H^2 \\
& \leq \frac{1}{2} \E \|\nabla u_0\|_H^2 + \frac{1}{2} \E \int_0^t \|g(s)\|_H^2 ds + \left(2c+\frac{C_B}{2}\right) \E \int_0^t \|\nabla u_{\lambda}(s)\|_H^2 ds\\
&  + \E \sup_{r \in [0, t]} \int_0^r (\nabla u_{\lambda}(s), \nabla B_{\lambda}(u_{\lambda}(s)))_H dW(s)
\end{align*}
Finally, using Burkholder-Davis-Gundy inequality, Hölder's inequality, and Young's inequality on the stochastic integral in the last inequality, we obtain 
\begin{align*}
& \E \sup_{r \in [0, t]} \int_0^t (\nabla u_{\lambda}(s), \nabla B_{\lambda}(u_{\lambda}(s)))_H dW(s) \\
& \lesssim \E \left(\int_0^t\|\nabla u_{\lambda}(s)\|_H^2 \|\nabla B_{\lambda}(u_{\lambda}(s))\|_{\mathcal{L}^2(U, H)}^2ds\right)^{1/2} \\
& \leq \E \left(\sup_{r \in [0, t]} \|\nabla u_{\lambda}(t)\|_H^2 \right)^{1/2} \left(\int_0^t \|\nabla B_{\lambda}(u_{\lambda}(s))\|_{\mathcal{L}^2(U, H)}^2 ds \right)^{1/2} \\
& \leq \frac{1}{4} \E \sup_{r \in [0, t]} \|\nabla u_{\lambda}(t)\|_H^2  + C_B \E \int_0^t \|\nabla u_{\lambda}(s)\|_H^2 ds
\end{align*}
and so, for every $t \in [0, T]$ we have 
\begin{align*}
& \frac{1}{4} \E \sup_{r \in [0, t]} \|\nabla u_{\lambda}(t)\|_H^2+\frac{1}{2} \E \int_0^t \|\Delta u_{\lambda}(s)\|_H^2\\
&  \lesssim \frac{1}{2} \E \|\nabla u_0\|_H^2 + \frac{1}{2} \E \int_0^t \|g(s)\|_H^2 ds  + \left(2c+\frac{3}{2}C_B\right) \E \int_0^t \|\nabla u_{\lambda}(s)\|_H^2 ds\\
& \leq \frac{1}{2} \E \|\nabla u_0\|_H^2 + \frac{1}{2} \E \int_0^t \|g(s)\|_H^2 ds  + \left(2c+\frac{3}{2}C_B\right) \E \int_0^t  \sup_{r \in [0, s]}\|\nabla u_{\lambda}(r)\|_H^2 ds
\end{align*}
Applying Gronwall's lemma, we can conclude that there exits a constant $a$, independent of $\lambda$, such that
\begin{equation}\label{fourthEstimate}
\|u_{\lambda}\|_{L^2(\Omega; L^{\infty}(0, T; V))} + \|u_{\lambda}\|_{L^2(\Omega; L^2(0, T; V_0))} \leq a 
\end{equation}
Therefore, thanks to \eqref{convergenceU} and \eqref{fourthEstimate}, using the lower semicontinuity of the norms, we have that the the limit process $u \in L^2(\Omega; C^{0}([0, T]; H)) \cap L^2(\Omega; L^2(0, T; V_0)) \cap L^2(\Omega; L^{\infty}(0, T; V))$ is an analytically strong solution to problem \eqref{acSpde}.

\subsection{Estimates on the derivatives}
Fix $n \geq 2$ and assume $(A1)$, $(A2)_n$, and $(A3)_n$. Let $u$ be the variational solution to $\eqref{acSpde}$ and consider the function $G_n$ as defined in \eqref{Gn}. We want to write Itô's formula for $\int_D G_n(u)$. However, since $G'_n$ is not Lipschitz-continuous and $G''_n$ is not bounded, we cannot immediately apply the classical result \cite[Theorem 4.2]{Pardoux}, as we have done in Appendix [\ref{generalizedIto}]. A rigorous approach would require an approximation on the function $G_n$ in order to recover such regularity. For example, since $G'_n$ is monotone increasing and continuous, then we can identify it with a maximal monotone graph in $\mathbb{R} \times \mathbb{R}$. Therefore, for every $\lambda \in (0, 1)$, we can consider its Yosida approximation $G'_{n, \lambda}$ and define
\begin{equation*}
G_{n, \lambda}(r):= \int_0^r G'_{n, \lambda}(s) ds, \quad r \in \mathbb{R}
\end{equation*}
In this way we have that $G_{n, \lambda} \in C^2(\mathbb{R})$ is such that $G'_{n, \lambda}$ is Lipschitz-continuous and $G''_{n, \lambda}$ is continuous and bounded, and so we can apply \cite[Theorem 4.2]{Pardoux} to $\int_D G_{n, \lambda}(u)$. However, since this is not restrictive in our direction, we shall proceed formally in order to make the treatment lighter and to avoid heavy notations. Then, for every $t \in [0, T]$, $\mathbb{P}$-a.s.\
\begin{equation}\label{itoDer}
\begin{alignedat}{1}
& \int_D G_n(u(t)) + \int_0^t  (G''_n(u(s)), |\nabla u(s)|^2)_H ds + \int_0^t (G'_n(u(s)), F'(u(s)))_H ds\\
& = \int_D G_n(u_0) + \int_0^t (G'_n(u(s)), g(s))_H ds + \frac{1}{2} \int_0^t \sum_{k \in \mathbb{N}} \int_D G''_n(u(s)) |h_k(u(s))|^2 ds \\
& + \int_0^t (G'_n(u(s)), B(u(s)))_H d W(s)
\end{alignedat}
\end{equation}
Since for $r \in (-1, 1)$ we have that $F'(r) = \beta(r)-2cr$, then there exist $\underline{r}$, $\overline{r} \in (-1,1)$ such that for $r \in (-1, \underline{r})$ it holds that $F'(r) \leq -2$, while for $r \in (\overline{r}, 1)$ it holds that $F'(r) \geq 2$. Hence 
\begin{align*}
& \int_0^t (G'_n(u(s)), F'(u(s)))_H ds  = \int_0^t \int_{\{u(s) \in (-1, \underline{r})\}} G'_n(u(s)) F'(u(s)) ds\\
& + \int_0^t \int_{\{u(s) \in [\underline{r}, \overline{r}]\}} G'_n(u(s)) F'(u(s)) ds  + \int_0^t \int_{\{u(s) \in (\overline{r}, 1)\}} G'_n(u(s))F'(u(s))ds\\
& \geq 2 \int_0^t \int_{\{u(s) \in (-1, \underline{r}) \cup (\overline{r}, 1)\}} |G'_n(u(s))|ds + \int_0^t \int_{\{u(s) \in [\underline{r}, \overline{r}]\}} G'_n(u(s)) F'(u(s)) ds
\end{align*}
Thanks to the assumptions on the external force $g$ in $(A3)_n$, we also have  
\begin{equation}
\int_0^t (G'_n(u(s)), g(s))_H ds \leq \int_0^t \int_D |G'_n(u(s))| ds
\end{equation}
Moreover, using the assumptions in $(A2)_n$ on $(h_k)_{k \in \mathbb{N}}$, for every $k \in \mathbb{N}$ and for a.e.\ $(s, \omega)  \in (0,t)\times\Omega$, we have
\begin{align*}
& h_k(u(s)) = \frac{(u(s)-1)^{n+1}}{n!}\int_0^1 h_k^{(n+1)}(1+r(u(s)-1)) (1-r)^n dr\\
& h_k(u(s)) = \frac{(u(s)+1)^{n+1}}{n!}\int_0^1 h_k^{(n+1)}(-1+r(u(s)+1)) (1-r)^n dr
\end{align*}
which implies that
\begin{equation} \label{estimateHk}
\begin{alignedat}{1}
& |h_k(u(s))| \leq \frac{||h_k||_{W^{n+1, \infty}(-1, 1)}}{(n+1)!} (u(s)-1)^{n+1}\\
& |h_k(u(s))| \leq \frac{||h_k||_{W^{n+1, \infty}(-1, 1)}}{(n+1)!} (u(s)+1)^{n+1}\\
\end{alignedat}
\end{equation}
Therefore, thanks to \eqref{condDerGn}, \eqref{Cn}, and \eqref{estimateHk} we have 
\begin{align*}
& \frac{1}{2} \int_0^t  \sum_{k \in \mathbb{N}} \int_D G''_n(u(s)) |h_k(u(s))|^2 ds  \\
& \leq \frac{a''_{n}}{2}\int_0^t \sum_{k \in \mathbb{N}} \int_D \frac{|h_k(u(s))|^2}{(1-u(s)^2)^{n+1}} ds\\
& \leq \frac{a''_{n}}{2 (n+1)!^2} \int_0^t \sum_{k \in \mathbb{N}} ||h_k||^2_{W^{n+1, \infty}(-1, 1)}  \int_D \left( \frac{|u(s)-1| |u(s)+1|}{|u(s)-1| |u(s)+1|} \right)^{n+1} ds \\
& = \frac{a''_{n}}{2 (n+1)!^2} C_n t |D|
\end{align*}
Now, taking expectations in \eqref{itoDer}, combining all the information together, using the fact that $G''_n(r) \geq 0$ for all $r \in (-1,1)$, and rearranging the terms, we obtain the following inequality
\begin{align*}
& \E \int_D G_n(u(t))+ \E \int_0^t \int_D |G'_n (u(s))| ds\\
& \leq \frac{a''_{n}}{2 (n+1)!^2} C_n t |D| + \E \int_D G_n(u_0) + \E \int_0^t \int_{\{u(s) \in [\underline{r}, \overline{r}]\}} \left(|G'_n(u(s))| - G'_n(u(s))F'(u(s))\right) ds 
\end{align*}
Thanks to the continuity of the functions $G_n'$ and $F'$, the last term on the right hand-side in the last inequality is bounded by a constant $a^1_n$. Therefore, taking the supremum in time, we obtain  
\begin{equation*}
\sup_{t \in [0, T]} \E \int_D G_n (u(t))+\E \int_0^T \int_D |G'_n(u(s))| ds \leq a^1_n+ \frac{a''_{n}}{2 (n+1)!^2} C_n T |D|+\E \int_D G_n(u_0) 
\end{equation*}
hence, there exists a constant $a_n$ such that 
\begin{equation*}
\|G_n(u)\|_{L^{\infty}(0, T; L^1(\Omega; D))} + \|G'_n(u)\|_{L^1(\Omega; L^1(0, T; D)} \leq a_n
\end{equation*}
Now, since estimating $|G'_n|$ coincides with estimating $G_{n+1}$ and estimating $G_n$ is equivalent to estimate $|F^{(n)}|$, the result \eqref{estimateDerLog} is proved.

\appendix 
\section{A generalized Itô's formula}\label{generalizedIto}
For each $\lambda \in (0,1)$, consider the function $F_{\lambda}$ as defined in \eqref{approxPotential}. Then $F_{\lambda} \in C^2(\mathbb{R})$ is such that $F'_{\lambda}$ is Lipschitz-continuous and $F''_{\lambda}$ is continuous and bounded. Consider the functional $\Phi_{\lambda}$ defined as follows
\begin{equation}\label{PhiLambda}
\Phi_{\lambda}: V \to \mathbb{R}, \quad \Phi_{\lambda}(u) := \int_D F_{\lambda}(u), \quad u \in V
\end{equation}
We compute the first and second order Gâteaux derivative of $\Phi_{\lambda}$. At this purpose fix $u$, $h$, $k \in V$, then we have 
\begin{equation}\label{firstDer}
D\Phi_{\lambda}(u; h) = \lim_{t \to 0^{+}} \frac{\Phi_{\lambda}(u+t h)-\Phi_{\lambda}(u)}{t}  = \lim_{t \to 0^{+}} \frac{1}{t} \int_D F_{\lambda}(u+th)-F_{\lambda}(u) = \int_D F'_{\lambda}(u)h
\end{equation}
Indeed, exploiting the Lipschitz-continuity of $F'_{\lambda}$ and using Hölder's inequality, by the dominated convergence theorem, we obtain
\begin{align*}
& \left|\frac{1}{t}\int_D F_{\lambda}(u+th)-F_{\lambda}(u) - \int_D F'_{\lambda}(u)h \right|^2  = \left|\int_0^1\int_D (F'_{\lambda}(u+s t h) - F'_{\lambda}(u)) h ds\right|^2 \\
& \leq \|h\|_H^2 \int_0^1 \|F'_{\lambda}(u+s t h)-F'_{\lambda}(u)\|_H^2 ds \to 0 \quad \text{ as } t \to 0^{+}
\end{align*}
Hence the functional $D \Phi_{\lambda}(u)[h]: = D \Phi_{\lambda}(u;h)$ defined in \eqref{firstDer} is the Gâteaux derivative of $\phi_{\lambda}$ at $u \in V$. Since $D \Phi_{\lambda}$ is continuous as a function of $u$, thanks to \cite[Theorem 1.9]{Ambrosetti}, $D \Phi_{\lambda}(u)$ is also the Fréchet derivative of $\Phi_{\lambda}$ at $u \in V$. In particular we have that $\Phi_{\lambda} \in C^1(V)$. Proceeding in a similar way, we have 
\begin{align}\label{secondDer}
D^2 \Phi_{\lambda}(u; h, k) & = \lim_{t \to 0^{+}} \frac{D \Phi_{\lambda}(u+tk; h)- D \Phi_{\lambda}(u;h)}{t} = \lim_{t \to 0^{+}}\frac{1}{t} \int_D (F'_{\lambda}(u+tk)-F'_{\lambda}(u)) h = \int_D F''_{\lambda}(u) h k 
\end{align} 
Indeed, thanks to continuity and boundedness of $F''_{\lambda}$ and to the fact that $L^4(D) \hookrightarrow V$, using Hölder's inequality, by the dominated convergence theorem, we obtain
\begin{align*}
& \left|\frac{1}{t} \int_D (F'_{\lambda}(u+tk)-F'_{\lambda}(u)) h - \int_D F''_{\lambda}(u) h k \right|^2 = \left|\int_0^1 \int_D (F''_{\lambda}(u+stk)-F''_{\lambda}(u))hk ds\right|^2\\
& \leq \|h\|_{L^4(D)} \|k\|_{L^4(D)} \int_0^1 \|F''_{\lambda}(u+stk)-F''_{\lambda}(u)\|_H^2ds \to 0 \quad \text{ as } t \to 0^{+}
\end{align*} 
Hence the bilinear form $D^2 \Phi_{\lambda}(u)[h, k] := D^2 \Phi_{\lambda}(u;h, k) $ defined in \eqref{secondDer} is the Gâteaux derivative of $\Phi_{\lambda}$ at $u$. Since $D^2 \Phi_{\lambda}$ is continuous as a function of $u$, $D^2 \Phi_{\lambda}(u)$ is also the second order Fréchet derivative of $\Phi_{\lambda}$ at $u$ and moreover $\Phi_{\lambda} \in C^2(V)$. Finally, let us also observe that $\Phi_{\lambda}$, $D \Phi_{\lambda}$, and $D^2 \Phi_{\lambda}$ are bounded on bounded subsets of $V$ and that $D \Phi_{\lambda}$ has linear growth. Therefore Itô's formula in \cite[Theorem 4.2]{Pardoux} can be applied to function $\Phi_{\lambda}$ and gives rise to \eqref{FLambda}.

\nocite{*}
\bibliographystyle{plain}
\bibliography{ref.bib}
\end{document}